
\documentclass[final]{siamart190516}
\usepackage[utf8]{inputenc}
\usepackage[english]{babel}
\usepackage{amsmath, mathtools, amsfonts, amssymb}
\usepackage{graphicx}
\usepackage{lmodern}
\usepackage{enumitem}
\usepackage{color}
\usepackage{cleveref}

\usepackage{tikz}
\usetikzlibrary{calc}
\usetikzlibrary{shapes,arrows}

\usepackage{algorithm}
\usepackage{algpseudocode}
\usepackage{algorithmicx}
\usepackage{varwidth}
\usepackage{siunitx}

\DeclareMathOperator{\Div}{div}
\newcommand{\R}{{\mathbb{R}}}

\renewcommand{\L}{{\mathcal{L}}}

\DeclareMathOperator{\id}{id}
\newcommand{\Gobs}{{\Gamma_{\text{d}}}}
\newcommand{\hatGobs}{{\hat{\Gamma}_{\text{d}}}}
\newcommand{\Gout}{{\Gamma_{\text{out}}}}
\newcommand{\Gin}{{\Gamma_{\text{in}}}}
\newcommand{\Gns}{{\Gamma_{\text{ns}}}}

\newcommand{\Oobs}{{\Omega_{\text{d}}}}
\newcommand{\hatOobs}{{\hat{\Omega}_{\text{d}}}}

\newcommand{\holdall}{{U}}

\newcommand{\bc}{{\text{bc}}}
\newcommand{\vol}{{\text{vol}}}

\newcommand{\lambdav}{\lambda}
\newcommand{\lambdabc}{\mu}

\newcommand{\mtestw}{{\psi_w}}
\newcommand{\mtestp}{{\psi_p}}
\newcommand{\mtestv}{{\psi_v}}
\newcommand{\mtestb}{{\psi_b}}
\newcommand{\control}{{c}}

\newcommand{\Next}{{n_{\text{ext}}}}
\newcommand{\ainit}{{\alpha_{\text{init}}}}
\newcommand{\atarget}{{\alpha_{\text{target}}}}
\newcommand{\adec}{{\alpha_{\text{dec}}}}

\newcommand{\deltawsec}{{\bar{h}_w}}
\newcommand{\deltaw}{{h_w}}
\newcommand{\deltav}{{h_v}}
\newcommand{\deltap}{{h_p}}
\newcommand{\deltab}{{h_b}}
\newcommand{\deltatestv}{{h_\mtestv}}
\newcommand{\deltatestp}{{h_\mtestp}}
\newcommand{\deltatestw}{{h_\mtestw}}
\newcommand{\deltatestb}{{h_\mtestb}}
\newcommand{\deltacontrol}{{h_\control}}
\newcommand{\deltalambdav}{{h_{\lambdav}}}
\newcommand{\deltalambdabc}{{h_\lambdabc}}

\newsiamremark{remark}{Remark}

\DeclareMathOperator{\Tr}{Tr}

\definecolor{blue}{RGB}{0,101,189} 
\definecolor{blues1}{RGB}{0,82,147}
\definecolor{blues2}{RGB}{0,51,89}
\definecolor{bluea1}{RGB}{152,198,234}
\definecolor{bluea2}{RGB}{100,160,200}
\definecolor{elfea}{RGB}{218,215,213} 
\definecolor{oraa}{RGB}{227,114,34} 
\definecolor{grea}{RGB}{162,173,0}

\date{\today}
\title{A continuous perspective on modeling of shape optimal design problems}
\author{J. Haubner\thanks{Department of Mathematics, Technical
		University of Munich, Boltzmannstr.~3, 85748 Garching b. M\"unchen,
		Germany \mbox{(haubnerj@ma.tum.de)}} \and M. Siebenborn\thanks{University of Hamburg, Department of Mathematics, Bundesstr.~55, 20146 Hamburg, Germany \mbox{(martin.siebenborn@uni-hamburg.de)}}
	    \and M. Ulbrich \thanks{Department of Mathematics, Technical
	    	University of Munich, Boltzmannstr.~3, 85748 Garching b. M\"unchen,
	    	Germany \mbox{(mulbrich@ma.tum.de)}}
}

\begin{document}
\maketitle

\begin{keywords}
  Shape optimization, method of mappings, Stokes flow
\end{keywords}

\begin{AMS}
  35R30, 49K20, 49Q10, 65K10
\end{AMS}

\begin{abstract}
  
In this article we consider shape optimization problems as optimal control problems 
via the method of mappings. Instead of optimizing over a set of admissible shapes 
a reference domain is introduced and it is optimized over a set of admissible 
transformations. The focus is on the choice of the set of transformations, 
which we motivate from a function space perspective.  
In order to guarantee local injectivity of the admissible transformations 
we enrich the optimization problem by a nonlinear constraint.
The approach requires no parameter tuning for the extension equation 
and can naturally be combined with geometric constraints 
on volume and barycenter of the shape. Numerical results for 
drag minimization of Stokes flow are presented.
\end{abstract}

\section{Introduction}

Shape optimal design is a vivid research field with a wide range of applications from fluid-dynamics \cite{schmidt2013three, BaLiUl, garcke2016stable}, acoustics \cite{udawalpola2008optimization}, electrostatics \cite{langer2015shape}, image restoration and segmentation \cite{hintermuller2004second}, interface identification in  transmission  processes \cite{schulz2015structured, harbrecht2013numerical, naegel2015scalable} and nano-optics \cite{hiptmair2018large} to composite material identification \cite{siebenborn2017algorithmic, naegel2015scalable}.

In shape optimization, a shape functional $\tilde j: \mathcal O_\text{ad} \to \mathbb R$ is optimized over a set of admissible shapes $\mathcal O_\text{ad}$, i.e., 
\begin{align}
\min_{\Omega \in \mathcal O_\text{ad}} \tilde j(\Omega).
\label{shapeopt}
\end{align}
There are various ways to tackle this problem. 
In this work, 
we focus on the method of mappings \cite{MuSi, BaLiUl, KeUl,  FLUU16}. Here, the optimization problem \eqref{shapeopt} is
reformulated as an optimization problem over a set of admissible transformations $\mathcal T_\text{ad}$
defined on a nominal domain $ \Omega$:
\begin{align}
\min_{\tau \in \mathcal T_\text{ad}}  j(\tau),
\end{align}
where $ j (\tau) \coloneqq \tilde j(\tau (\Omega))$. This approach is closely
related to techniques that use shape gradients and the Hadamard-Zol\'esio structure
theorem. 

Mesh degeneration is one of the bottlenecks in performing transformation-based 
shape optimization techniques, see e.g. \cite{EHLG18}. 
On the one hand, by the modeling of the optimization problem 
it has to be ensured that the boundary of the transformed domain 
is not self-intersecting. This can, e.g., be realized using bounds on the deformation or geometrical constraints, 
such as volume and barycenter constraints. 
On the other hand, mesh degeneration also appears for large deformations of the 
surface even if the boundary of the domain is not self-intersecting. 
Therefore, finding transformations that preserve the mesh quality is an active field of research. In \cite{IgStWe} it is proposed to work with an extension equation that preserves the mesh quality. This method, however, is limited to 2d cases. Another
approach is remeshing, see e.g. \cite{Wilke2005,EF16,B17}. 
The quality of the mesh can be improved by using a function $\psi(w)$ such that $\tau(\Omega) = (\id + \psi(w))(\Omega) = \Omega$, where $\psi(w)$ is either defined via the solution of a partial differential equation or via a solution of an optimization problem. Both methods allow for node relocations without changing $\Omega$ 
and hence are so called r-refinement strategies. 
Other approaches project the shape gradient to mimic the continuous behaviour 
motivated through the Hadamard-Zol\'esio structure theorem \cite{EHLG18}
or
work with extension equations that require parameter tuning 
in order to avoid mesh degeneration \cite{schulz2016computational, schulz2016efficient, langer2015shape}. However, finding adequate parameters for a given extension equation tends to be a time consuming effort. Moreover, the empirically determined parameters are typically tailored for one specific mesh and problem setting.

The starting point for our considerations is the fact that the second type of 
mesh degeneration is a phenomenon that only appears in the discretized setting. 
Thus we consider the problem from a continuous perspective and require
sufficient high regularity of the boundary deformations analogous to 
\cite{Kunisch2001, Slawig2004,KiVe13, BaLiUl} 
where parametrizations of the design boundary with sufficiently 
high regularity are used. 
Instead of preserving mesh quality, 
our approach ensures that all admissible controls yield transformations 
that map the reference domain $\Omega$ to a Lipschitz domain. 
Since the optimization problem is formulated in the continuous setting, 
this approach also allows for refinement and remeshing
techniques, wheareas from a discretized point of view, remeshing also requires a reinitialization of the optimization algorithm.
However, an accurate modeling remains challenging since, on the one hand, the most general setting, i.e., working with transformations in
$W^{1, \infty}( \Omega)^d$, is difficult since it is a non-reflexive Banach space. On the other 
hand, working with smoother spaces often requires $H^2$-conforming finite element methods as used in \cite{KiVe13}. 

In this work, we focus on the modeling of the shape optimization problem 
respecting the continuous requirements on the transformations. 
Motivated by the theoretical considerations in \cref{sec::generalformulation}, 
we consider Banach spaces $\tilde X, X, Y$ such that $ X \hookrightarrow \tilde X$
and $ Y \hookrightarrow \mathcal C^1(\overline \Omega)^d$ and a mapping 
$S$ that is continuous as a mapping $S: X \to Y$ and $S: \tilde X \to \mathcal C^1
(\overline \Omega)^d$.
In addition, we enrich the optimization problem with additional 
constraints and investigate 
\begin{align}
\begin{split}
\min_{\control \in X} & ~j (\id + w) + \frac\alpha{2} \| \control \|_{X}^2 
\\
\text{s.t. } &g(w) = 0,\\
& w = S(\control), \\
&\| \control \|_{ \tilde X} \leq \eta_2, \\
&\det(\nabla (\id + w)) \geq \eta_1 \quad \text{in }  \Omega, \\
\end{split}
\label{optprob2}
\end{align}
for $\eta_1 \in (0,1)$, $\eta_2 \geq 0$ where $g$ represents geometric constraints. 
We choose $S$ such that the requirements are fulfilled in two and three 
dimensions and work on Hilbert spaces. 
Therefore, we require $Y \hookrightarrow H^{\frac52 + \epsilon}(\Omega)$ 
with $\epsilon > 0$. To circumvent the use of 
$H^2$-conforming finite elements the regularity is lifted step-wise. 
In this paper, we focus on an approach that starts with a 
design parameter $\control \in L^2(\Gobs)$ that is mapped to a function 
$b \in H^2(\Gobs)$ by solving a Laplace-Beltrami equation. 
Imposing $b$ as Neumann boundary 
condition for an elliptic extension equation we obtain a deformation field $w$. 
However, there are various other possibilities. 
Alternatively, one could also start with $\control \in H^1(\Gobs)$ and impose $b$ as Dirichlet boundary condition for the elliptic extension equation. 
Compared to 
previous approaches, the only difference is the additional Laplace-Beltrami equation, which ensures sufficiently high regularity of the deformation field, and the additional
nonlinear constraint. This allows us to 
integrate this new approach without much effort into existing methods.

To test the formulation numerically, 
we focus on shape optimization for the steady state Stokes flow, see
e.g. \cite{mohammadi2010applied}. \cref{geo::conf} illustrates the geometrical configuration that we use as reference domain. We consider a rectangular domain with an obstacle in the center, which has a smooth boundary $\Gobs$, i.e.\ the design boundary.
With $\Oobs$ we denote the domain encircled by $\Gobs$.
On the left boundary of the domain $\Gin$ Dirichlet boundary conditions and on the right boundary $\Gout$ do-nothing boundary conditions are imposed. On the rest of the boundary no-slip boundary conditions are imposed. We optimize the shape of the obstacle via the method of mappings 
such that the drag is minimized.
\begin{figure}[ht!]
	\centering
	\begin{tikzpicture}
	\draw (0.0, 0.0) -- (5.0,0.0);
	\draw (0.0, 0.0) -- (0.0,3.4);
	\draw (5.0, 0.0) -- (5.0,3.4);
	\draw (0.0, 3.4) -- (5.0,3.4);
	\draw (2.5, 1.7) circle (20pt); 
	\draw (2.95, 1.7) node {$\Gobs$};
	\draw (-0.3, 1.7) node {$\Gin$};
	\draw (5.35, 1.7) node {$\Gout$};
	\draw (2.5, 3.65) node {$\Gns$};
	\draw (4.5, 2.9) node {$\Omega$};
	\end{tikzpicture}
	\caption{2d sketch of the geometrical configuration for a shape optimization problem that is governed by Stokes flow.}
	\label{geo::conf}
\end{figure}
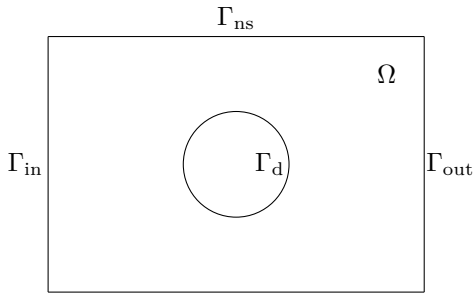

\Cref{sec::generalformulation} is devoted to the general formulation of the shape optimization problem. \Cref{sec::s1} motivates the 
validity of this approach by theoretical considerations for a special choice for the control-to-deformation mapping. 
\Cref{sec::Stokes} presents the application of the abstract framework to the Stokes flow example. 
Also other strategies for the control-to-deformation mapping 
are presented and only tested numerically.
An algorithmic realization for solving this optimization problem is given in \cref{sec::optsys}. Numerical results in \cref{sec::numerics} show the performance of the different strategies. 

\section{Shape Optimization Problem on Function Space}
\label{sec::generalformulation}
We consider the following optimization problem 
\begin{align}
\begin{split}
\min_{\control \in D_\text{ad}} &j (\tau) 
\\
\text{s.t. } &\tau = \id + w,\\ &g(w) = 0,\\
& w = S(\control),
\end{split}
\label{optprob}
\end{align}
where $g(w)$ represents geometric constraints. 
The design parameter is denoted by $\control$
and the corresponding transformation is defined via $\tau \coloneqq \id + w$.
Moreover, $D_\text{ad} \subset L^2(\Gamma)$ 
and $S$ are chosen such that the following assumptions hold true.
\begin{enumerate}[label=\textbf{A\arabic*}]
	\item For all admissible controls $\control \in D_\text{ad}$ there exists an open neighborhood $U$ of $\Omega$ and a $\mathcal C^1$-diffeomorphism $F: U \to U$ such that $F\vert_{\Omega} = \id + S(\control)$ a.e.. \label{assumption2}
	\item Let $\control_1, \control_2 \in D_\text{ad}$. Then $(\id + S(\control_1))(\Omega) = (\id + S(\control_2))(\Omega)$ if and only if $\control_1 = \control_2$ a.e.. 
	\label{assumption1}
\end{enumerate}
The second assumption \ref{assumption1} guarantees that there is a one-to-one correspondence between shapes and controls. 
The first assumption \ref{assumption2} ensures that $\id + w$ is the restriction of a $\mathcal C^1$-diffeomorphism that maps an open neighborhood of $\Omega$ to itself and implies the following lemma. 
\begin{lemma}
	Let $\Omega$ be a smooth domain, 
	and assumption \ref{assumption2} be fulfilled.  
	Then $(\id + S(\control))(\Omega)$ is a Lipschitz-domain for all admissible $\control \in 
    D_\text{ad}$.
\end{lemma}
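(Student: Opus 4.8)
The plan is to exploit assumption \ref{assumption2} directly: it hands us a $\mathcal C^1$-diffeomorphism $F: U \to U$ of an open neighborhood $U$ of $\overline\Omega$ with $F|_\Omega = \id + S(\control)$. Since $\Omega$ is a smooth domain, its boundary $\Gamma = \partial\Omega$ is (locally) the image of smooth charts; composing these charts with $F$ and using that $F$ is a $\mathcal C^1$-diffeomorphism between open sets in $\R^d$, I would show that $F(\Gamma)$ is locally the graph of a Lipschitz (indeed $\mathcal C^1$) function in suitable rotated coordinates, which is precisely the definition of a Lipschitz domain. The key technical input is that a $\mathcal C^1$-diffeomorphism maps a $\mathcal C^1$-hypersurface to a $\mathcal C^1$-hypersurface and, because its Jacobian is invertible at every point of the compact set $\overline\Omega$, it does so with uniform (Lipschitz) control on the charts.

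First I would fix $x_0 \in \Gamma$ and a smooth local parametrization of $\Gamma$ near $x_0$: after a rigid motion, there is a ball $B \subset \R^d$, a smooth $\varphi: B' \to \R$ (with $B' \subset \R^{d-1}$ the projection of $B$) such that $\Omega \cap B = \{x \in B : x_d < \varphi(x')\}$ and $\Gamma \cap B = \{x \in B : x_d = \varphi(x')\}$. Next I would transport this chart by $F$: set $y_0 = F(x_0)$. Since $F$ is a $\mathcal C^1$-diffeomorphism on $U$, it is an open map, so $F(B\cap U)$ is an open neighborhood of $y_0$, and $F$ maps $\Omega \cap B$ onto $(\id+S(\control))(\Omega) \cap F(B\cap U)$ and $\Gamma \cap B$ onto its topological boundary portion. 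The parametrization $s \mapsto F(s', \varphi(s'))$ of the image hypersurface is $\mathcal C^1$ and, by the inverse function theorem applied to $F$ together with the transversality of $\nabla F(x_0)$ to the tangent plane of $\Gamma$ at $x_0$, it can be written, after a further rotation, as a $\mathcal C^1$ graph $y_d = \tilde\varphi(y')$; since $\mathcal C^1$ functions are locally Lipschitz this exhibits the local graph/Lipschitz property. Finally I would note that $\Gamma$ is compact, cover it by finitely many such neighborhoods, and conclude that $(\id+S(\control))(\Omega)$ is a Lipschitz domain (being also bounded and lying locally on one side of its boundary, which is inherited from the corresponding property of $\Omega$ via the homeomorphism $F$).

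The main obstacle — more a bookkeeping point than a genuine difficulty — is verifying the "locally on one side of the boundary" condition and the graph representation simultaneously, i.e., checking that the image of the subgraph $\{x_d < \varphi(x')\}$ under $F$ lands consistently on one side of the image hypersurface. This follows because $F$ is a homeomorphism onto its (open) image, so it preserves the local complement structure: $F(\Omega\cap B)$ and $F(B\cap U)\setminus F(\overline\Omega)$ are the two connected components of $F(B\cap U)\setminus F(\Gamma\cap B)$, and the graph $\tilde\varphi$ separates them. One should also record that the Lipschitz (in fact $\mathcal C^1$) constants of the charts $\tilde\varphi$ are bounded uniformly on the compact boundary, using continuity of $\nabla F$ and invertibility of $\nabla F$ on the compact set $\overline\Omega$; this makes the covering argument clean and yields a genuine (uniform) Lipschitz domain rather than merely a domain that is locally a graph.
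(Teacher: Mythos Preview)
Your argument is correct: you reproduce, by hand, the standard proof that a $\mathcal C^1$-diffeomorphism of an open neighborhood of $\overline\Omega$ carries a smooth (hence Lipschitz) domain to a Lipschitz domain, via local graph charts, the implicit function theorem, compactness of $\partial\Omega$, and preservation of the one-sided condition under the homeomorphism $F$. The paper, by contrast, does not carry out any of this; it simply invokes \cite[Thm.~4.1]{Hofmann2007}, which is precisely the statement that the image of a Lipschitz domain under a $\mathcal C^1$-diffeomorphism of a neighborhood of its closure is again a Lipschitz domain. So the two approaches agree in substance but differ in presentation: the paper outsources the chart-pushing and one-sidedness bookkeeping to a reference, while you spell it out. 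Your version is self-contained and makes the mechanism transparent; the citation is terser and also covers the case where $\Omega$ is merely Lipschitz rather than smooth, which your chart argument would need a small adaptation to handle (Lipschitz graphs instead of smooth $\varphi$, Rademacher a.e.\ differentiability in place of classical tangent planes). One phrasing to tighten: what you call ``transversality of $\nabla F(x_0)$ to the tangent plane'' is really just the fact that $DF(x_0)$ is invertible, so it maps $T_{x_0}\Gamma$ to a hyperplane, which after a rotation becomes $\{y_d=0\}$; the implicit function theorem then yields the $\mathcal C^1$ graph $\tilde\varphi$.
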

\begin{proof}
	Follows directly from \cite[Thm. 4.1]{Hofmann2007}.
\end{proof}

\subsection{On the choice of $D_\text{ad}$ and $S$}
\label{sec::motivation}
Inspired by \cite[Lem. 4]{HUU19}, we present sufficient conditions for assumption \ref{assumption2} to be fulfilled. The following extension property will be a helpful
tool.
\begin{lemma}
	Let $d \in \lbrace2,3\rbrace$, $\Omega$ be a bounded Lipschitz domain,
	$\eta_1 \in (0,1)$.
	Furthermore, let $X, \tilde X, Y$ be Banach spaces such that 
	$Y  \hookrightarrow \mathcal C^1(\overline \Omega)^d$, $X \hookrightarrow \tilde X \hookrightarrow L^2(\Gamma)$ and 
	$S: X \to Y$, $S: \tilde X \to \mathcal C^1(\overline \Omega)^d$ 
	be continuous. Then, there exists $\eta_2  > 0$ such that for 
	\[
	D_\text{ad} \coloneqq \lbrace \control \in X ~:~ \det(\nabla (\id + S(\control))) > \eta_1, ~
	\| \control \|_{\tilde X} \leq \eta_2 \rbrace,
	\]
	assumption \ref{assumption2} holds true.
	\label{lem::fulfillsassumptions} 
\end{lemma}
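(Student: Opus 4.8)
The plan is to verify assumption \ref{assumption2} by constructing, for each admissible control $\control$, an open neighborhood $U \supset \overline\Omega$ and a $\mathcal C^1$-diffeomorphism $F: U \to U$ extending $\id + S(\control)$. The key observation is that $Y \hookrightarrow \mathcal C^1(\overline\Omega)^d$, so $w = S(\control) \in \mathcal C^1(\overline\Omega)^d$, and since $\Omega$ is a bounded Lipschitz domain we may extend $w$ to a function $\tilde w \in \mathcal C^1_c(\R^d)^d$ (for instance via a bounded linear extension operator composed with a cutoff). The natural candidate is then $F = \id + \tilde w$ restricted to a suitable $U$. For this to be a $\mathcal C^1$-diffeomorphism onto its image it suffices that $\nabla(\id + \tilde w) = I + \nabla\tilde w$ be invertible everywhere and that $F$ be injective; global injectivity of $\id + \tilde w$ on $\R^d$ is guaranteed once $\|\nabla\tilde w\|_{L^\infty(\R^d)} < 1$, which in turn makes each Jacobian invertible as well, so by the inverse function theorem $F$ is a $\mathcal C^1$-diffeomorphism onto the open set $F(\R^d) = \R^d$. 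One then takes $U$ to be a small enough open neighborhood of $\overline\Omega$ on which $F$ maps into $U$ — this requires a little care, but since $\tilde w$ is compactly supported and close to zero in $\mathcal C^1$, one can choose $U$ to be a slightly enlarged Lipschitz neighborhood that is mapped into itself; alternatively one invokes \cite[Thm.\ 4.1]{Hofmann2007} or \cite[Lem.\ 4]{HUU19} directly, which already package the statement ``$\mathcal C^1$ with small $\mathcal C^1$-norm extends to a diffeomorphism of a neighborhood onto itself.''

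The crux is therefore to produce $\eta_2 > 0$ such that every $\control$ with $\|\control\|_{\tilde X} \le \eta_2$ and $\det(\nabla(\id + S(\control))) > \eta_1$ yields $w = S(\control)$ with sufficiently small $\mathcal C^1(\overline\Omega)^d$-norm — small enough that the extension $\tilde w$ has $\mathcal C^1$-norm below the threshold needed above. Here I would use the continuity of $S: \tilde X \to \mathcal C^1(\overline\Omega)^d$ together with $S(0) = 0$ (which should hold, or else one centers appropriately): given any prescribed tolerance $\delta > 0$ for $\|w\|_{\mathcal C^1(\overline\Omega)^d}$, continuity at $0 \in \tilde X$ furnishes $\eta_2 > 0$ with $\|S(\control)\|_{\mathcal C^1(\overline\Omega)^d} \le \delta$ whenever $\|\control\|_{\tilde X} \le \eta_2$. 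Choosing $\delta$ so that the bounded extension operator pushes $\tilde w$ below the injectivity threshold (accounting for the operator norm of the extension and the cutoff) then closes the argument. Note that the determinant constraint $\det(\nabla(\id + S(\control))) > \eta_1$ is what guarantees the Jacobian of $\id + w$ stays invertible on $\Omega$ itself, and it is consistent with — indeed implied by, for $\eta_1$ close to $1$ — the smallness of $\|\nabla w\|_{L^\infty}$; but keeping it as an explicit constraint is what makes the admissible set closed and is needed later, so I would simply remark that it is compatible with the smallness we impose.

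I expect the main obstacle to be the bookkeeping around the neighborhood $U$: one must exhibit a \emph{single} open set $U \supset \overline\Omega$ that works and verify $F(U) = U$ (not merely $F(U) \subset U$), since assumption \ref{assumption2} demands a diffeomorphism of $U$ onto itself. The clean way to handle this is to appeal to the cited results \cite[Thm.\ 4.1]{Hofmann2007} and \cite[Lem.\ 4]{HUU19}, which already establish precisely that a $\mathcal C^1$ map on $\overline\Omega$ with small $\mathcal C^1$-norm (or more precisely, whose distance to the identity in $\mathcal C^1$ is small) restricts from a diffeomorphism of a neighborhood of $\overline\Omega$ onto itself; then the only work left is the quantitative smallness estimate from the previous paragraph. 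A secondary, minor point is making sure $S(0) = 0$ or otherwise identifying the right base point about which to apply continuity; if $S$ is linear (as in the Laplace--Beltrami plus elliptic-extension construction of \cref{sec::Stokes}) this is automatic, and the lemma is stated at a level of generality where one may simply assume it or note it follows from the construction.
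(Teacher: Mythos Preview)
Your outline matches the paper's proof: extend $w = S(\control)$ to $\tilde w \in \mathcal C^1(\R^d)^d$ by a bounded extension operator, multiply by a smooth cutoff, set $F = \id + \tilde w\varphi$, and use smallness of $\|\control\|_{\tilde X}$ (through the continuity of $S:\tilde X\to\mathcal C^1$) to force $\|\tilde w\varphi\|_{\mathcal C^1}$ below the injectivity and Jacobian thresholds. Your observation about needing $S(0)=0$ (or linearity) is well taken; the paper in fact silently uses a bound $\|S(\control)\|_{\mathcal C^1}\le C_S\|\control\|_{\tilde X}$, i.e.\ linearity, so this is not an issue you introduced.

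The genuine gap is the step you yourself flag: producing $U$ with $F(U)=U$. Your two proposed shortcuts do not work. \cite[Thm.~4.1]{Hofmann2007} goes the other way (a $\mathcal C^1$-diffeomorphism of a neighborhood maps Lipschitz domains to Lipschitz domains); it does not manufacture the diffeomorphism. \cite[Lem.~4]{HUU19} is invoked in the paper only for $\mathcal C^1$-regularity of the inverse, not for the self-map property. And the phrase ``a small enough open neighborhood of $\overline\Omega$ on which $F$ maps into $U$'' points in the wrong direction: you want $U$ \emph{large} enough to contain the support of the cutoff, so that $F=\id$ on $\R^d\setminus U$.

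The paper fixes $U=B_\alpha(\Omega)$ and chooses the cutoff $\varphi$ supported in $B_{3\alpha/4}(\Omega)$, hence $F=\id$ on $\R^d\setminus U$. From injectivity of $F$ on $\R^d$ one gets $F(U)\subset U$; surjectivity of $F:U\to U$ is then proved by a short topological argument (take $\tilde x\in U\setminus F(\overline U)$, let $\bar x$ be the closest point of the compact set $F(\overline U)$ to $\tilde x$, and derive a contradiction with $F$ being a local diffeomorphism at the preimage of $\bar x$). Your own route---first showing $F:\R^d\to\R^d$ is bijective and then reading off $F(U)=U$ by complementation---is also valid and arguably cleaner, but you must actually \emph{prove} $F(\R^d)=\R^d$; it does not follow from injectivity plus the inverse function theorem alone. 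One line suffices: $F$ is a proper injective local diffeomorphism (proper since $F=\id$ outside a compact set), hence its image is open and closed in $\R^d$, hence all of $\R^d$.
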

\begin{proof}
	Let $\control \in D_{ad}$ be feasible and $\tau_{\control}: \Omega \to \tau_{\control}(\Omega)$, $\tau_{\control} \coloneqq \id + S(\control)$. 
	We know that $S(\control) \in Y$ which embeds 
	into $\mathcal C^1(\overline{\Omega})^d$. 
	Moreover, there exists a constant $C_S>0$ such that \begin{align}\| S(\control)\|_{\mathcal C^1(\overline{\Omega})^d} \leq C_S \| \control \|_{\tilde X} \label{eq::cont} \end{align} for all $\control \in D_\text{ad}$. 
	
	By the constraint $\det( \nabla \tau_{\control}) \geq \eta_1$ we know that $\tau_\control$ is a local diffeomorphism. For $\tau_{\control}$ to be a global 
	diffeomorphism bijectivity of $\tau_{\control}$ has to be ensured, see \cite[Sec. 2, p. 36]{Lee}. Since surjectivity holds by definition of $\tau_\control$, it remains to show injectivity. This can be achieved
	by choosing $\eta_2$ sufficiently small such that $\| S(\control)\|_{W^{1, \infty}(\Omega)^d} < 1$. In fact, assuming that there exist $x_1, x_2 \in \Omega$ such that $\tau_{\control}(x_1) - \tau_{\control}(x_2) = 0$ implies 
	\begin{align}
	\| x_1 - x_2 \| = \| S(\control)(x_1) - S(\control)(x_2) \| \leq \|S(\control)\|_{W^{1, \infty}(\Omega)^d} \|x_1 - x_2 \|, 
	\label{eq::inj}
	\end{align}
	and hence $x_1 = x_2$ which yields injectivity. By using the inverse function theorem it can be shown that $\tau^{-1}_{\control}$ is $\mathcal C^1$ for all $\eta_2>0$ 
	sufficiently small, see also \cite[Lem. 4]{HUU19}. 
	
	In order to fulfill assumption \ref{assumption2} we have to be able to extend $\tau_{\control}$ to a $\mathcal C^1$-diffeomorphism $F: U \to U$ where $U$ is an open neighborhood of $\overline \Omega$.
	
    By \cite[Thm. 2.74, (2.145)]{Brudnyi} for $k \in \mathbb N_0$, 
    there exists an extension operator 
    $\mathrm{Ext}: \mathcal C(\overline \Omega) \to \mathcal C(\mathbb R^d)$ such that 
    $\mathrm{Ext}(\mathcal C^\ell(\overline \Omega)) \subset \mathcal C^\ell (\mathbb R^d)$ 
    for all $\ell \in \lbrace 0, \ldots, k\rbrace$ and such that there exists $\tilde C >0$ with
    \[
    \max_{|\alpha| = \ell} \sup_{x \in \mathbb R^n} | D^\alpha \mathrm{Ext} (f) (x) | 
    \leq \tilde C \| f \|_{\mathcal C^\ell(\overline \Omega)} \quad \forall f \in \mathcal C^\ell(\overline \Omega)
    \]
    for all $\ell \in \lbrace 0, \ldots, k\rbrace$. Hence there exists an extension $\tilde w$ and a constant $C_\text{ext} >0 $ such that
    \begin{align}
    \| \tilde w \|_{\mathcal C^1(\mathbb R^d)^d} \leq C_\text{ext} \| S(c) \|_{\mathcal C^1(\overline \Omega)^d}
    \label{pro:ext}
    \end{align}
    and $\tilde w \vert_{\Omega} = S(c)$.
	We choose $\alpha >0$ and set $U := B_\alpha(\Omega)$. Let 
	$\varphi := 1_{B_{\frac{\alpha}2}(\Omega)} \ast \psi $ be the convolution of
	the indicator function $1_{B_{\frac{\alpha}2}(\Omega)}$ of $B_{\frac{\alpha}2}(\Omega)$ and a mollifier $\psi \in \mathcal C^\infty(\mathbb R^d)$ such that
	$\int_{\mathbb R^d} \psi dx = 1$ and $\mathrm{supp}(\psi) \subset B_{\frac{\alpha}4}(0)$. Hence, $\varphi \in \mathcal C^\infty(\mathbb R^d)$ and there exists $C_{\alpha} > 0$ such that
	\begin{align}
	\| \varphi \|_{\mathcal C^1(\mathbb R^d)} \leq C_{\alpha}.
	\label{pro:conv}
	\end{align}
	Define
	$F(x):= \mathrm{id} + \tilde w \varphi$, which is an element of $\mathcal C^1(\overline\Omega)^d$.
	By \eqref{pro:conv}, \eqref{pro:ext}, \eqref{eq::cont} and the definition of $D_{ad}$
	there exists $C>0$ such that
	\begin{align}
	\| \tilde w \varphi \|_{\mathcal C^1(\mathbb R^d)^d} \leq C C_\text{ext}C_{\alpha} C_S \eta_2.
	\label{pro:est}
	\end{align}
	Possibly reducing $\eta_2$ such that $\eta_2 < (C C_\text{ext} C_{\alpha} C_S)^{-1}$ implies injectivity of $F: \mathbb R^d \to \mathbb R^d$ analogous to \eqref{eq::inj}. By definition, $\varphi = 0$ on $ \mathbb R^d \setminus U$ and hence $F(\mathbb R^d \setminus U)
	= \mathbb R^d \setminus U$.  Due to injectivity of $F: \mathbb R^d 
	\to \mathbb R^d$ there is no $x \in U$ such that $F(x) \in \mathbb R^d
	\setminus U$. Thus, $F(U) \subset U$ and
	$F: U \to U$ is injective. 
	Furthermore, $F$ is a local diffeomorphism after 
	possibly again 
	reducing $\eta_2$ since there exists a constant $\tilde C >0$ such that
	\begin{align}
	\begin{split}
	\mathrm{det}(\nabla F(x))  &\geq 1 - \|  \mathrm{det}({\nabla F(x)}) -  \mathrm{det}(\nabla \id( x)) \|_{C(\mathbb R^d)^d} \\ &
	\geq 1 - \tilde C \| \tilde w \varphi \|_{\mathcal C^1(\mathbb R^d)^d} \geq  1 - \tilde C C C_\text{ext} C_\alpha C_S \eta_2 
	\end{split}
	\end{align}
	for all $x \in \mathbb R^d$ where we used \eqref{pro:est} and that the determinant is a polynomial 
	of degree $d$ in the entries of the matrix where $d$ denotes the dimension.
	
	We now show surjectivity.
	Since $\overline U$ is compact and $F$ is continuous, $ F(\overline U)$ is compact. Assume that 
	$F: U \to U$ is not surjective, then there exists $\tilde x \in U$ s.t. $\tilde x \notin F(U)$. Since $F(\partial U )= \partial U$ 
	($F$ acts like the identity on $\partial U$) and $U$ is open, $\tilde x \notin 
	F(\overline U)$. Since $F(\overline U)$ is compact and $F$ is continuous,
	there exists
	$\bar x \in \mathrm{argmin}_{x \in F(\overline U)} 
	\frac12 \| x - \tilde x \|_2^2$. By the choice of $\bar x$, $ \bar x + t (\tilde x - \bar x)
	\notin F(\overline U)$ for all $t \in (0,1]$. Furthermore, 
	$ \bar x + t (\tilde x - \bar x) \in U$ for all $t  \in (0,1]$, since otherwise
	there would exist $ \tilde t \in (0,1)$ such that $\bar x + \tilde t (\tilde x - \bar x)
	 \in \partial U = F(\partial U) \subset F(\overline U)$.
	This implies $  \bar x + t (\tilde x - \bar x) \notin \mathbb R^d\setminus U = F(\mathbb R^d \setminus U)$ for all $t \in (0,1]$. Therefore, $ \bar x + t (\tilde x - \bar x)
	\notin F(\mathbb R^d)$ for all $t \in (0,1]$ and
	$B_\epsilon(\bar x) \not \subset F(\mathbb R^d)$ for all $\epsilon > 0$. 
	This contradicts $F: \mathbb R^d \to 
	\mathbb R^d$ being a local diffeomorphism since, for $\bar y \in 
	\mathbb R^d$ such that $F(\bar y) = \bar x$ (which exists since $\bar x \in F(\overline U)$), there exists an open neighborhood of $\bar y$ that is diffeomorphically mapped to an open neighborhood of
	$\bar x$.
	
	Thus, 
	we have shown that $F$ is a bijective local 
	diffeomorphism. Hence, $F$ is a global diffeomorphism and 
	$\mathcal C^1$-regularity of the inverse is again obtained as in \cite[Lem. 4]{HUU19}
	by possibly again reducing $\eta_2$. 
	Therefore, $F: U \to U$ is a 
	$\mathcal C^1$-diffeomorphism.
\end{proof}

\begin{remark}
	Alternatively, if one provides a mesh for the hold all domain $\holdall$, $w$ and the constraint $\det(\nabla (\id + w))$ can be defined on $\holdall$.
\end{remark}

\Cref{lem::fulfillsassumptions} motivates to consider optimization problems of the form
\eqref{optprob2}.
\subsection{Displacement along normal directions}
\label{sec::s1}
In order to avoid technicalities we consider a smooth domain $\Omega$.
Furthermore, we assume that $\Gamma\setminus \Gobs \neq \emptyset$.
In this section we consider $S(\control) \coloneqq S_\Omega(S_{\Gobs}(\control)) \Next$, where 
\begin{itemize}
	\item $\Next$ is a smooth extension of the outer unit normal vectors to $\Omega$,
	\item $S_{\Gobs}$ is the solution operator of the Laplace-Beltrami equation on $\Gobs$
	\[
	- \Delta_{\Gobs} b + b = f \quad \text{on } \Gobs,
	\]
	\item $S_\Omega$ is the solution operator of the elliptic equation 
	\begin{align*}
	- \Delta z = 0 \quad & \text{in }  \Omega, \\
	z = 0 \quad & \text{on }  \Gamma \setminus  \Gobs, \\
	\nabla z \cdot n = b  \quad & \text{on }  \Gobs.
	\end{align*}
\end{itemize} 

In correspondence with numerical examples that we consider in \cref{sec::numerics}, we assume 
$ \Gobs$ to be a compact manifold without boundary. 
Using \cref{lem::fulfillsassumptions} we prove that the assumptions \ref{assumption2} and \ref{assumption1} are 
fulfilled if $\Next$ and the Banach space $X$ are chosen in an appropriate way, see \cref{lem::tildeS}. To this end, we recall well-known results for the elliptic solution operators.
\begin{lemma}[Elliptic equation on compact manifolds without boundary]
	Let $s \geq -1$, $ \Gobs$ be a smooth and compact Riemannian manifold without boundary 
	and consider the system
	\begin{align}
	- \Delta_{\Gobs} b + b = f 
	\label{elliptic_on_bdry}
	\end{align}
	on $ \Gobs$, where $ \Delta_{\Gobs}$ denotes the Laplace-Beltrami operator on 
	$ \Gobs$. Then, for any $f \in H^s( \Gobs)$ there exists a unique solution 
	$b \in H^{s+2}( \Gobs)$ and the corresponding solution operator $S_{\Gobs}: H^s( \Gobs) \to H^{s+2}( \Gobs)$ is continuous.
	\label{lem::laplacebeltrami}
	
\end{lemma}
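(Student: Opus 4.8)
The plan is to establish existence, uniqueness, and continuity of the solution operator for \eqref{elliptic_on_bdry} by the standard variational/spectral theory of elliptic operators on a closed Riemannian manifold, bootstrapping from the base case $s=0$ to all $s\geq -1$ via elliptic regularity and duality. First I would treat the case $s=0$: the bilinear form $a(b,v) = \int_{\Gobs} \nabla_{\Gobs} b \cdot \nabla_{\Gobs} v + b v \, \mathrm{d}\Gobs$ is bounded and coercive on $H^1(\Gobs)$ — coercivity is immediate since $a(b,b) = \|b\|_{H^1(\Gobs)}^2$, no Poincaré-type inequality needed because of the zeroth-order term — so Lax–Milgram gives for every $f \in L^2(\Gobs) = H^0(\Gobs)$ (identified with a functional on $H^1(\Gobs)$ via the $L^2$ pairing) a unique weak solution $b \in H^1(\Gobs)$ with $\|b\|_{H^1} \leq \|f\|_{(H^1)^*} \leq \|f\|_{L^2}$.

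Next I would invoke interior (here: global, since the manifold is closed and boundaryless) elliptic regularity for the operator $-\Delta_{\Gobs} + \mathrm{id}$, which is a second-order elliptic operator with smooth coefficients on the compact manifold $\Gobs$: if $f \in H^s(\Gobs)$ then the weak solution satisfies $b \in H^{s+2}(\Gobs)$ together with the a priori estimate $\|b\|_{H^{s+2}(\Gobs)} \leq C\big(\|f\|_{H^s(\Gobs)} + \|b\|_{L^2(\Gobs)}\big) \leq C' \|f\|_{H^s(\Gobs)}$, using the $s=0$ bound to absorb the lower-order term. This settles all $s \geq 0$. One technical point worth stating explicitly is that Sobolev spaces $H^s(\Gobs)$ for noninteger or negative $s$ on a closed manifold are well-defined intrinsically (e.g. via a partition of unity and local charts, or equivalently via the spectral decomposition of $-\Delta_{\Gobs}$), and the regularity estimate holds on this full scale; alternatively one can phrase everything through the eigenfunction expansion of $-\Delta_{\Gobs}$, where $S_{\Gobs}$ acts as the Fourier multiplier $(1+\lambda_k)^{-1}$ on the $k$-th eigenspace, which manifestly maps $H^s = \{ \sum c_k e_k : \sum (1+\lambda_k)^s |c_k|^2 < \infty\}$ continuously into $H^{s+2}$ with norm $1$.

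For the range $s \in [-1,0)$ I would argue by duality (or, again, directly via the spectral characterization, which is cleanest): the operator $-\Delta_{\Gobs}+\mathrm{id}$ is formally self-adjoint, so its solution operator on negative-order spaces is the adjoint of its solution operator on positive-order spaces; concretely, define $b = S_{\Gobs} f$ for $f \in H^s(\Gobs)$ by $\langle b, v\rangle_{H^{s+2}, H^{-s-2}} $ paired appropriately against test functions and check via the $s \geq 0$ result applied to the adjoint problem that $b \in H^{s+2}(\Gobs)$ with the continuity estimate. Uniqueness on the whole scale follows from uniqueness at $s=0$ since $H^{s+2}(\Gobs) \hookrightarrow L^2(\Gobs)$ for $s \geq -2$, in particular for $s \geq -1$. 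I expect the main obstacle to be purely expository rather than mathematical: one must be careful about which negative-index Sobolev spaces and dual pairings are meant and cite a clean reference for the elliptic regularity estimate on the full Sobolev scale over a closed manifold (for instance Taylor's \emph{Partial Differential Equations I}, or a pseudodifferential-operator account), after which the statement is an immediate consequence; I would likely just reference such a source and sketch the spectral argument in one or two lines.
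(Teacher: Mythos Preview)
Your proposal is correct and essentially matches the paper's treatment: the paper does not give an argument at all but simply cites Taylor, \emph{Partial Differential Equations I}, pp.~362--363, which is precisely the reference you anticipate invoking at the end of your sketch. Your Lax--Milgram/elliptic-regularity/spectral outline is the standard content behind that citation and is fine as written.
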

\begin{proof}
	See \cite[pp.362-363]{TaylorPDEI}.
\end{proof} 
Since $ \Gobs$ is closed and has positive distance from $ \Gamma \setminus \Gobs$, classical results for the Dirichlet and Neumann boundary value problem also hold for the mixed boundary value problem in our setting whereas it gets more involved when the positive distance assumption is not fulfilled, see, e.g., 
\cite{Lieberman86}.
\begin{lemma} Let $ \Omega$ be a smooth domain and $ \Gobs \subset  \Gamma$ be a closed subset of the boundary such that $ \Gamma \setminus  \Gobs \neq 
	\emptyset$. Assume that $ \Gobs$ and $ \Gamma\setminus \Gobs$ have
	positive distance. Let $s \geq 2$. 
	Consider the following system 
	\begin{align}
	\begin{split}
	- \Delta z = 0 \quad & \text{in }  \Omega, \\
	z = 0 \quad & \text{on }  \Gamma \setminus  \Gobs, \\
	\nabla z \cdot n = b  \quad & \text{on }  \Gobs.
	\end{split}
	\label{eq::ell}
	\end{align}
	Then, for every $b \in H^{s-\frac32}( \Gobs) $ there exists a unique solution $z \in H^s( \Omega) $ and the corresponding solution 
	operator $S_\Omega: H^{s-\frac32}( \Gobs) \to H^s( \Omega)$ is continuous. 
	\label{lem::help2}
\end{lemma}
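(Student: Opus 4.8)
The plan is to establish well-posedness and continuity of the solution operator for the mixed boundary value problem \eqref{eq::ell} by reducing it to standard elliptic regularity theory. The key structural advantage here is the positive distance assumption between $\Gobs$ and $\Gamma \setminus \Gobs$: this means that near any boundary point, the boundary conditions are locally of a single type (either pure Dirichlet or pure Neumann), so there is no interaction between the two boundary portions and no reduced regularity from corners at the interface between boundary conditions. I would begin by recalling the variational formulation: multiply by a test function $v$ in the space $H^1_{\Gamma \setminus \Gobs}(\Omega) = \{v \in H^1(\Omega) : v = 0 \text{ on } \Gamma \setminus \Gobs\}$, integrate by parts, and obtain the weak form $\int_\Omega \nabla z \cdot \nabla v \, dx = \int_{\Gobs} b \, v \, ds$. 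Since $\Gamma \setminus \Gobs \neq \emptyset$ has positive measure, a Poincaré-type inequality gives coercivity of the bilinear form on $H^1_{\Gamma\setminus\Gobs}(\Omega)$, so Lax--Milgram yields a unique weak solution $z \in H^1(\Omega)$ with $\|z\|_{H^1(\Omega)} \lesssim \|b\|_{H^{-1/2}(\Gobs)}$, hence certainly for $b \in H^{s-3/2}(\Gobs)$ with $s \geq 2$.

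Next I would bootstrap the regularity. The standard approach is a localization argument: take a smooth partition of unity subordinate to a cover of $\overline{\Omega}$ by open sets that are either interior, or meet only $\Gobs$, or meet only $\Gamma \setminus \Gobs$ — this is possible precisely because of the positive distance hypothesis. On interior patches one uses interior elliptic regularity; on patches meeting $\Gamma \setminus \Gobs$ one uses boundary regularity for the homogeneous Dirichlet problem; on patches meeting $\Gobs$ one uses boundary regularity for the Neumann problem with data $b \in H^{s-3/2}(\Gobs)$. Each of these is classical (e.g. Lions--Magenes, or Taylor's PDE volumes) for smooth domains: the Neumann problem $-\Delta z = 0$ with $\nabla z \cdot n = b \in H^{s-3/2}$ produces $z \in H^s$ locally, and likewise the Dirichlet problem with zero data produces $z \in H^s$ locally. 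Summing the local estimates against the partition of unity and absorbing lower-order terms via the a priori $H^1$ bound gives $z \in H^s(\Omega)$ with $\|z\|_{H^s(\Omega)} \leq C \|b\|_{H^{s-3/2}(\Gobs)}$. Linearity of the problem then immediately gives that $S_\Omega$ is a well-defined bounded linear operator, hence continuous.

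The main obstacle — really the only subtle point — is making the localization near the Neumann boundary rigorous at the level of fractional Sobolev indices $s - 3/2$, since one must track how the trace/normal-trace spaces transform under the flattening diffeomorphisms and verify that the constants are uniform. In a fully self-contained treatment one would invoke the half-space model problem: for $-\Delta z = 0$ in $\R^d_+$ with $\partial_{x_d} z = b$ on $\R^{d-1}$, the solution via Fourier transform in the tangential variables satisfies $z \in H^s(\R^d_+)$ iff $b \in H^{s-3/2}(\R^{d-1})$, with norm equivalence. Transferring this through the boundary charts and the partition of unity, then handling the Dirichlet patches analogously (where the model estimate $b \in H^{s-1/2}$ Dirichlet data gives $H^s$ is also classical, and here the data is zero), completes the argument. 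Since the paper's own Lemma on the Laplace--Beltrami operator and the surrounding discussion signal that these are to be treated as well-known, I would ultimately just cite the relevant boundary regularity results for smooth domains (e.g. \cite{TaylorPDEI} or a comparable reference) and remark that the positive distance assumption reduces the mixed problem to the two classical cases.
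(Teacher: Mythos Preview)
Your proposal is correct and aligns with the paper's treatment: the paper's proof consists solely of a citation to \cite[p.~188, Rem.~7.2]{LM_v1} (Lions--Magenes), which is precisely the classical elliptic regularity machinery you invoke. Your sketch simply unpacks what that citation contains---Lax--Milgram for the $H^1$ solution, then localization via a partition of unity exploiting the positive-distance hypothesis to reduce to pure Dirichlet or pure Neumann model problems---so there is no substantive difference in approach, only in level of detail.
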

\begin{proof}
	see \cite[p.188, Rem. 7.2]{LM_v1}.
\end{proof}
These two lemmas imply that assumptions \ref{assumption2} and \ref{assumption1} are fulfilled for the choice $\tilde S = S_\Omega\circ S_{\Gobs}$ and 
$X = H^1(\Gobs)$ as the following lemma shows.
\begin{lemma}
	Let $\Omega$ be a bounded smooth $\mathcal C^\infty$-domain and $X = \tilde X = L^2(\Gobs)$.
	Let $\tilde S (\control) \coloneqq S_\Omega(S_{\Gobs} (\control))$ for all $\control 
	\in X$. 
	Then there exists $\eta_2 >0$ such that assumptions \ref{assumption2} and \ref{assumption1} are fulfilled for 
	$S(\cdot) = \tilde S(\cdot) \Next$ for $D_{ad}$ 
	chosen as in Lemma \ref{lem::fulfillsassumptions}.
	\label{lem::tildeS}
\end{lemma}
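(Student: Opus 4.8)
The plan is to derive \ref{assumption2} by verifying the hypotheses of \cref{lem::fulfillsassumptions}, and then to establish \ref{assumption1} by a separate argument that exploits the fact that the displacement acts along the single fixed field $\Next$.

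First I would assemble the regularity chain for $S = \tilde S(\cdot)\,\Next$. Starting from $\control \in L^2(\Gobs) = H^0(\Gobs)$, \cref{lem::laplacebeltrami} with $s=0$ gives $b := S_{\Gobs}(\control) \in H^2(\Gobs)$ with $\|b\|_{H^2(\Gobs)} \leq C\|\control\|_{L^2(\Gobs)}$, and then \cref{lem::help2} with $s = \frac72$ (note $s-\frac32 = 2$ and $s \geq 2$) yields $z := S_\Omega(b) = \tilde S(\control) \in H^{7/2}(\Omega)$, again continuously. For $d \in \{2,3\}$ one has $\frac72 - \frac d2 > 1$, hence $H^{7/2}(\Omega) \hookrightarrow \mathcal C^1(\overline\Omega)$; and since $\Omega$ is a $\mathcal C^\infty$-domain, $\Next$ may be taken in $\mathcal C^\infty(\overline\Omega)^d$, so $v \mapsto v\,\Next$ maps $H^{7/2}(\Omega)$ continuously into $Y := H^{7/2}(\Omega)^d \hookrightarrow \mathcal C^1(\overline\Omega)^d$. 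Thus $S \colon X = L^2(\Gobs) \to Y$ is continuous, and composing with $Y \hookrightarrow \mathcal C^1(\overline\Omega)^d$ also $S \colon \tilde X = L^2(\Gobs) \to \mathcal C^1(\overline\Omega)^d$ is continuous. \Cref{lem::fulfillsassumptions} with these $X,\tilde X,Y$ then provides $\eta_2 > 0$ such that the associated $D_\text{ad}$ satisfies \ref{assumption2}, and this remains true if $\eta_2$ is shrunk further.

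For \ref{assumption1}, one direction is trivial. For the converse, set $\tilde\Omega := (\id + S(\control_1))(\Omega) = (\id + S(\control_2))(\Omega)$ and $\tau_i := \id + S(\control_i)$, $i=1,2$. By \ref{assumption2} each $\tau_i$ is the restriction of a $\mathcal C^1$-diffeomorphism of a neighborhood of $\Omega$, hence maps $\Omega$ diffeomorphically onto $\tilde\Omega$ and $\partial\Omega$ onto $\partial\tilde\Omega$. Since $S(\control_i) = z_i\Next$ vanishes on $\Gamma\setminus\Gobs$ and, after possibly reducing $\eta_2$, $\tau_i(\Gobs)$ lies in a thin tubular neighborhood of $\Gobs$ which is disjoint from $\Gamma\setminus\Gobs$ (the two pieces of $\Gamma$ have positive distance), the decomposition $\partial\tilde\Omega = (\Gamma\setminus\Gobs)\cup\tau_i(\Gobs)$ is disjoint, so $\tau_1(\Gobs) = \tau_2(\Gobs) =: \hatGobs$ is determined by $\tilde\Omega$ alone. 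On $\Gobs$ we have $\tau_i(x) = x + z_i(x)\,n(x)$ with $\|z_i\|_{\mathcal C(\overline\Omega)} \leq C\eta_2$; shrinking $\eta_2$ below the tubular-neighborhood radius of $\Gobs$, the map $(x,t)\mapsto x + t\,n(x)$ is injective near $\Gobs$, so $\tau_1(\Gobs) = \tau_2(\Gobs)$ forces $z_1 = z_2$ on $\Gobs$. Then $w := z_1 - z_2$ is harmonic in $\Omega$ with zero trace on $\Gamma\setminus\Gobs$ and on $\Gobs$, hence $w\equiv 0$ and $z_1 = z_2$ in $\Omega$; taking conormal derivatives on $\Gobs$ gives $b_1 = \nabla z_1\cdot n = \nabla z_2\cdot n = b_2$, and since $S_{\Gobs}$ is bijective by \cref{lem::laplacebeltrami} we conclude $\control_1 = \control_2$. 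Thus \ref{assumption1} holds for the same $D_\text{ad}$.

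I expect the crux to be the passage from the set equality $\tau_1(\Gobs) = \tau_2(\Gobs)$ to the pointwise equality $z_1|_{\Gobs} = z_2|_{\Gobs}$: this is exactly where one uses that every admissible displacement is a scalar multiple of the one fixed field $\Next$ (so that no tangential reparametrization is possible) together with compactness and smoothness of $\Gobs$ to invoke the tubular-neighborhood theorem, and where $\eta_2$ may have to be chosen smaller than the value coming out of \cref{lem::fulfillsassumptions}. The regularity bookkeeping and the harmonic-uniqueness step are routine.
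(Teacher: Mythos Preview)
Your proof is correct and follows the same route as the paper: the regularity chain $L^2(\Gobs)\to H^2(\Gobs)\to H^{7/2}(\Omega)\hookrightarrow\mathcal C^1(\overline\Omega)^d$ feeds \cref{lem::fulfillsassumptions} for \ref{assumption2}, and for \ref{assumption1} one passes from equal image domains to $z_1|_{\Gobs}=z_2|_{\Gobs}$, then to $b_1=b_2$ via injectivity of the Neumann-to-Dirichlet map, then to $\control_1=\control_2$ via injectivity of $S_{\Gobs}$. The paper simply asserts the first implication and cites the second, whereas you supply both---via the tubular-neighborhood argument and harmonic uniqueness respectively---so your version is more detailed but not a different approach.
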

\begin{proof}
	By \cref{lem::laplacebeltrami} and \cref{lem::help2}, 
	$S_\Omega(S_\Gamma(X)) \subset H^{\frac72}(\Omega)$,
	which embeds into $\mathcal C^1(\overline{\Omega})$. Thus, 
	$\tilde S$ fulfills the requirements of
	\cref{lem::fulfillsassumptions} and assumption \ref{assumption2}
	holds.
	Let $\control_1, \control_2 \in X$ and $S(\control_1)(\Omega) = S(\control_2)(\Omega)$. 
	Then, $\tilde S(\control_1)\vert_{\Gobs} 
	= \tilde S(\control_2)\vert_{\Gobs}$. Linearity and 
	well-definedness of the Neumann-to-Dirichlet map for the elliptic equations 
	\eqref{eq::ell}, see, e.g., \cite{K04}, implies $S_{\Gobs}(\control_1) = 
	S_{\Gobs}(\control_2)$. Thus, due to linearity of $S_{\Gobs}$, $\control_1 = \control_2$ a.e.
	and assumption \ref{assumption1} is fulfilled.
\end{proof}

\section{Example: Stokes flow}
\label{sec::Stokes}
We now apply \eqref{optprob2} to minimize
the drag of an obstacle in steady-state Stokes flow, see \cref{geo::conf}.
The optimization problem is given by
\begin{align}
\begin{split}
\min_{\control \in L^2( \Gobs)} &\frac12 \int_{\tau(\Omega)}( \nabla v : \nabla v) dx + \frac\alpha{2} \| \control \|_{L^2( \Gobs)}^2 \\
\text{s.t. } & {\begin{cases}
\Delta v + \nabla p = 0 \quad & \text{in } \tau(\Omega), \\
\Div(v) = 0 & \text{in }\tau(\Omega),\\
v = 0 &\text{on } \tau(\Gobs) \cup \Gns, \\
v = g_{in} &\text{on } \Gin, \\
{(\nabla v - p I) n= 0} & \text{on } \Gout,
\end{cases}}\\
&\tau = \id + w, \\
&w = S(\control),\\
&g(w) = 0,\\
&\det(\nabla \tau) \geq \eta_1 \quad \text{in }  \Omega.
\end{split}
\label{optprobstokes}
\end{align}
Here, $v$ denotes the fluid velocity, $p$ the fluid pressure and $g_{in}$ 
non-homogeneous Dirichlet boundary conditions on $\Gin$ and $S$ is chosen such
that the trace $S(d)\vert_{\Gns \cup \Gin \cup \Gout} = 0$ for all admissible $d \in L^2(\Gobs)$.
In order to exclude trivial solutions we add geometric constraints $g(w)=0$ to the optimization problem \cref{optprobstokes}, which are further discussed in \cref{sec::geocon}. The additional norm constraint on $\control$ is not 
crucial for the numerical implementation of this problem and is therefore neglected.

\subsection{Algorithmic realization}
We want to use state-of-the-art finite element toolboxes to solve the optimization 
problem. This can, e.g., be realized by penalizing the inequality constraints. Hence, we
obtain the equality constrained optimization problem:
\begin{align}
\begin{split}
\min_{\control \in L^2( \Gobs)} &\frac12 \int_{\tau(\Omega)}( \nabla v : \nabla v) dx + \frac\alpha{2} \| \control \|_{L^2( \Gobs)}^2 + \frac{\gamma_1}2 \| (\eta_1 - \det(\nabla \tau))_+\|^2_{L^2(\Omega)} \\
\text{s.t. } & {\begin{cases}
	\Delta v + \nabla p = 0 \quad & \text{in } \tau(\Omega), \\
	\Div(v) = 0 & \text{in }\tau(\Omega),\\
	v = 0 &\text{on } \tau(\Gobs) \cup \Gns, \\
	v = g_{in} &\text{on } \Gin, \\
	{(\nabla v - p I) n= 0} & \text{on } \Gout,
	\end{cases}}\\
&\tau = \id + w, \\
&w = S(\control),\\
&g(w) = 0,
\end{split}
\label{optprobstokespen}
\end{align}
where $\gamma_1 >0$ denotes a penalization parameter and 
$(\cdot)_+ \coloneqq \max(0, \cdot)$. In order to simplify the notation, we will use the notation $J_\tau \coloneqq \det(D\tau)$ in the sequel.
The first order necessary optimality conditions of \cref{optprobstokespen} yield a system of nonlinear, coupled PDEs, see \cref{sec::optsys}.

\begin{algorithm}[h!]
	\caption{Optimization strategy}
	\label{alg::opt_str}
	\begin{algorithmic}[1]
		\Require $0 < \atarget \leq \ainit$, $0 < \adec < 1$, $0 \leq \gamma_1$, 
		$0 < \eta_1$
		\State $k \gets 0$, $\alpha_k \gets \ainit$, $\control_k \gets 0$
		\While {$\alpha_k \geq \atarget$}
		\State Solve \eqref{optprobstokespen} iteratively with initial point $\control_k$ and solution $\control$		
		\State $\alpha_{k+1} \gets \adec \alpha_k$, $\control_{k+1} \gets \control$
		\State $k\gets k+1$
		\EndWhile
	\end{algorithmic}
\end{algorithm}

In principle, one solution of a nonlinear system of PDEs leads to the desired optimal solution for a given $\atarget$.
From a computational point of view, yet, the solvability of this system with semismooth Newton methods depends on the initialization. Therefore, we solve 
\eqref{optprobstokespen} for a sequence of decreasing regularization parameters, 
see \cref{alg::opt_str}. The following sections are devoted to explicitly derive the 
optimality system of \eqref{optprobstokespen} in a weak form, see \cref{sec::optsys}. Therefore, the 
geometrical constraints (\cref{sec::geocon}) are discussed and 
the different strategies for the control-to-transformation
mapping $S$ are investigated in more detail.

\subsection{Geometrical constraints}
\label{sec::geocon}

For shape optimization in the context of fluid dynamics it is necessary to fix the test specimen in space to avoid design improvements by moving it to the walls of the flow tunnel or shrinking it to a point.
In our situation this is to fix volume and barycenter of the obstacle body $\Oobs$.
In the following we use the symbol $\hat{\cdot}$ to refer to the deformed geometrical entity in terms of the mapping $\tau$.
If, for instance, $\Omega$ denotes the reference domain, then $\hat{\Omega} \coloneqq \tau (\Omega)$.

Let $\holdall$ be the hold all domain and the obstacle $\hatOobs = \holdall \setminus \hat{\Omega}$.
Further let 
\begin{equation}
\label{eq::vol_bc}
\vol(\hatOobs) = \int_\hatOobs 1 \, d\hat x, \quad \bc(\hatOobs) = \frac{1}{\vol(\hatOobs)}\int_\hatOobs \hat x\, d \hat x
\end{equation}
denote volume and barycenter of the obstacle.


In the numerical implementation we work with the corresponding boundary integral formulations instead. Let $\hat{n} : \hatGobs \rightarrow \R^d$ be the unit normal on $\hatGobs$ and $f \in L^1(\hatGobs)$. According to \cite[Prop. 2.47, Prop. 2.48]{sokolowski2012introduction}, we have
\begin{equation}
\label{eq::boundary_integral_trafo}
\int_{\hat{\Gamma}} \hat{f} \, ds(\hat x) = \int_\Gamma f \Vert J_\tau(D\tau)^{-\top} n \Vert_2 \, ds(x).
\end{equation}
Furthermore, the normal vector on the deformed boundary $\hatGobs$ is given in terms of the normal vector $n$ on the boundary of the reference domain $\Gobs$ as
\begin{equation}
\label{eq::normal_trafo}
\hat{n} \circ \tau = \frac{1}{\Vert (D\tau)^{-\top} n \Vert_2} (D\tau)^{-\top} n.
\end{equation}
Applying \eqref{eq::boundary_integral_trafo} and \eqref{eq::normal_trafo} to \eqref{eq::vol_bc} we obtain
\begin{equation}
\begin{aligned}
\vol(\hat{\Omega}) &= \int_{\hat{\Omega}} 1 \, d\hat x = \frac{1}{d} \int_{\hatGobs} \hat x^\top \hat{n} \, d \hat s(\hat x) \\ &= \frac{1}{d} \int_{\Gobs} (x+w)^\top (\hat{n} \circ \tau) \Vert J_\tau (D\tau)^{-\top} n \Vert_2 \, ds(x)\\
&= \frac{1}{d} \int_{\Gobs} (x+w)^\top (D\tau)^{-\top} n \vert J_\tau \vert \, ds(x).
\end{aligned}
\end{equation}
for the volume and
\begin{equation}
\label{eq::bc_surface}
\begin{aligned}
&(\bc(\hatOobs))_i = \frac{1}{\vol(\hatOobs)} \int_{\hatOobs} \hat x_i \, d\hat x = \frac{1}{\vol(\hatOobs)} \int_{\hatGobs} \frac{1}{2} x_i^2 \hat{n}_i \, d\hat s(\hat x)\\
&= \frac{1}{2\vol(\hatOobs)} \int_{\Gobs} (x_i + w_i)^2 \frac{1}{\Vert (D\tau)^{-\top} n \Vert_2} \left[(D\tau)^{-\top} n\right]_i \Vert J_\tau (D\tau)^{-\top} n \Vert_2 \, ds(x)\\
&= \frac{1}{2 \vol(\hatOobs)} \int_{\Gobs} (x_i + w_i)^2 \left[(D\tau)^{-\top} n\right]_i \vert J_\tau\vert \, ds(x).
\end{aligned}
\end{equation}
for the $i$-th component of the barycenter.
Hence, with the assumptions that the barycenter of the initial shape fulfills $\bc(\Oobs)_i = 0$ and
$J_\tau \geq \eta_1 > 0 $ we obtain the constant volume condition
\begin{equation}
\label{eq::fixed_volume}
\int_{\Gobs} (x+w)^\top (D\tau)^{-\top} n J_\tau  - x^\top n \,ds(x) = 0
\end{equation}
and the barycenter condition reduces to
\begin{equation}
\int_{\Gobs} (x_i + w_i)^2 \left[(D\tau)^{-\top} n\right]_i J_\tau \, ds(x) = 0.
\end{equation}
In the sequel we shortly write $ds$ instead of $ds(x)$.

\subsection{On the different strategies for $S$}

In \cref{sec::generalformulation} we discuss one particular choice of the operator $S$. We extend this by two further options.
In general, the operator $S$ involves solving an equation of Laplace-Beltrami type and an elliptic extension equation.
Thereby, the scalar-valued control variable $\control$ is mapped from the shape boundary $\Gobs$ to a vector-valued displacement field $w$ in $\Omega$.
The major difference in the considered strategies is when the variable becomes vector-valued.
We thus consider a mapping given by
\begin{equation}\label{eq::control_to_state_map}
\control\; \overset{i)}{\mapsto} \;b \;\overset{ii)}{\mapsto}\; z \overset{iii)}{\mapsto}\; w
\end{equation}
where i) is realized via the Laplace-Beltrami solution operator 
on $\Gobs$ and ii) via a solution operator for an elliptic equation in $\Omega$.
Depending on when the variables becomes vector-valued the auxiliary $z$ and step iii) is optional.
We start by recalling the strategy introduced and investigated in \cref{sec::s1} and 
then numerically test two further strategies.

Note that of the following choices for the operator $S$ only strategy S1 is entirely covered by the lemmas in \cref{sec::generalformulation}. For assumption \ref{assumption2} \cref{lem::fulfillsassumptions} can be applied in all three cases. In particular, our analysis in \cref{sec::generalformulation} can be used to show assumption \ref{assumption1} for strategy S1. It remains to verify assumption \ref{assumption1} for S2 and S3.
Nevertheless, we propose and numerically investigate S2 and S3 due to their computational attractiveness. 

\subsubsection*{First strategy (S1)}\label{S1}
This strategy only allows for displacements of $\Gobs$ along normal directions (cf.\ \cref{sec::s1}). We choose 
\begin{equation}
S(\control) \coloneqq S_\Omega(S_{\Gobs}(\control)) \Next,
\end{equation}
where $\Next$ denotes an extension of the outer unit normal vector field to $\Omega$. 
The corresponding weak formulation for the operators $S_{\Gobs}$ and $S_\Omega$ (step i) and ii), respectively) is given by
\begin{alignat}{2}
\label{eq::scalar_extension}
\int_\Omega \nabla z\cdot \nabla \psi_z \, dx =& \int_\Gobs b\psi_{z} \, ds &\quad \forall \psi_{z}
\\
\label{eq::laplace_beltrami_scalar}
\int_\Gobs b\psi_{b} + \nabla_{\Gobs} b \cdot \nabla_{\Gobs} \psi_{b} \, ds =& \int_\Gobs \control \psi_{b} \, ds &\quad \forall \psi_{b}.
\end{alignat}
Since our intention is to formulate everything suitable for weak form languages of the major FEM toolboxes, we
realize step iii) in the form
\begin{equation}
\label{eq::vect_L2_proj}
\int_\Omega w \cdot \psi_n \, dx = \int_\Omega z \Next \cdot \psi_n \, dx \quad \forall \psi_n.
\end{equation}
\subsubsection*{Second strategy (S2)}\label{S2}
As a second strategy we consider 
\begin{equation}
S (\control) \coloneqq S_\Omega^d(S_{\Gobs}(\control)n),
\end{equation}
where $n$ denotes the outer unit normal vector field on $\Gobs$.
Thus, the elliptic extension equation in step ii) (corresponding to the operator $S_\Omega^d$) is defined to be vector-valued, which in terms allows to omit step iii).
This reads in weak formulation as
\begin{equation}
\label{eq::vector_extension}
\int_\Omega (Dw + Dw^\top) : D \psi_w \, dx = \int_\Gobs bn \cdot \psi_{w} \, ds \quad \forall \psi_{w}
\end{equation}
and replaces \eqref{eq::scalar_extension}.
Note that we use the symmetrized derivative $(Dw + Dw^\top)$ in \cref{eq::vector_extension}, which corresponds to solving the Lam\'e system with 
Lam\'e parameters $\mu = 1$ and $\lambda = 0$ and is found out to lead to better 
mesh qualities after deformation compared to using $Dw$ instead. 
With our approach it is not 
required to tune these parameters contrary to previous approaches, see e.g.
\cite{schulz2016computational, DokkenFunke20}.
This is later substantiated with numerical results in \cref{fig::2d_nose_grid}.
Furthermore, equation \eqref{eq::vect_L2_proj} is dropped from the system.

\subsubsection*{Third strategy (S3)}\label{S3}
In a third possible strategy the scalar-valued control $\control$ is immediately mapped to a vector-valued $b$ in step i) by the Laplace-Beltrami solution operator.
We obtain the following representation

\begin{equation}
S(d)\coloneqq S_\Omega^d(S_{\Gobs}^d(\control n)),
\end{equation}
where again $n$ is the unit outer normal field at $\Gobs$.
Note that the scalar-valued control $\control$ enters as a scaling of $n$ and then a vector-valued Laplace-Beltrami type equation is considered.
We denote the corresponding vector-valued solution operator by $S_{\Gobs}^d$ which is given in the following weak formulation
\begin{equation}
\label{eq::vector_laplace_beltrami_scalar}
\int_\Gobs b \cdot \psi_{b} + D_{\Gobs} b : D_{\Gobs} \psi_{b} \, ds = \int_\Gobs \control n \cdot \psi_{b} \, ds \quad \forall \psi_{b}.
\end{equation}
The operator $S_\Omega^d$ is the same as in S2 and given in weak form by \cref{eq::vector_extension}.
\subsection{Optimality system}
\label{sec::optsys}
We present the optimality system for strategy S3. Strategies S1 and S2 
can be handled analogously. 
Using that the weak formulation of 
the transformed Stokes equations is given by
\begin{multline}
\int_{\Omega} \left( D v (D\tau)^{-1} \right) :  \left( D \mtestv (D\tau)^{-1} \right)  J_\tau \, dx - \int_{\Omega}  p \Tr \left( D \mtestv (D\tau)^{-1} \right) J_\tau\, dx\\
+ \int_\Omega \mtestp \Tr (D v(D\tau)^{-1}) J_\tau \,dx = 0 \quad \forall \mtestv, \mtestp,
\end{multline}
the Lagrangian for the energy dissipation minimization problem of a Stokes flow around an obstacle with fixed volume and barycenter is given by
\begin{multline}
\label{eq::Lagrangian}
\L ( w, v, p, b, \mtestw,\mtestv,\mtestp, \mtestb, \control, \lambdav, \lambdabc) =\\
\frac{1}{2} \int_{\Omega} \left( D v (D\tau)^{-1} \right) :  \left( D v (D\tau)^{-1} \right)  J_\tau\, dx
+\frac{\alpha}{2}\int_\Gobs \control^2 \, ds
+ \frac{\gamma_1}{2} \int_{\Omega} (( \eta_1 - J_\tau )_+ )^2 \, dx \\
- \int_{\Omega} \left( D v (D\tau)^{-1} \right) :  \left( D \mtestv (D\tau)^{-1} \right)  J_\tau \, dx + \int_{\Omega}  p \Tr \left( D \mtestv (D\tau)^{-1} \right) J_\tau\, dx\\
 - \int_\Omega \mtestp \Tr (D v(D\tau)^{-1}) J_\tau \,dx - \int_\Omega (D w + D w^\top): D \mtestw \, dx + \int_\Gobs b\cdot \mtestw \, ds\\
- \int_\Gobs b\cdot \mtestb + D_{\Gobs} b : D_{\Gobs} \mtestb \, ds + \int_\Gobs \control n \cdot \mtestb \, ds \\
+ \sum_{i=1}^d\lambdabc_i \int_{\Gobs} (x_i + w_i)^2 \, \left((D\tau)^{-\top} n\right)_i J_\tau \, ds + \frac{\lambdav}{d} \int_{\Gobs} (x+w)^\top (D\tau)^{-\top} n J_\tau - x\cdot n \, ds,
\end{multline}
where
$\psi_{(\cdot)}$ denotes the adjoint states. 

For the sake of simplicity we write in the sequel $\L$ for $\L ( w, v, p, \mtestw,\mtestv,\mtestp,\control, \lambdav, \lambdabc)$.
Using $((D\tau)^{-1})_w \deltaw = - (D \tau)^{-1} D \deltaw (D \tau)^{-1}$ and 
$(J_\tau)_w \deltaw = \Tr((D\tau)^{-1} D \deltaw) J_\tau$, 
the first order necessary optimality conditions are given by
\begingroup
\allowdisplaybreaks
\begin{align}
\L_{w}  \deltaw = &
- \int_\Omega ( Dv (D \tau)^{-1}): (D v (D \tau)^{-1} D \deltaw (D \tau)^{-1})  J_\tau \, dx \notag\\ &
+ \frac12 \int_\Omega (Dv (D \tau)^{-1}): (Dv (D \tau)^{-1})\Tr((D\tau)^{-1} D \deltaw) J_\tau \, dx \notag\\ &
- \gamma_1 \int_\Omega (\eta_1 - J_\tau)_+ \Tr((D\tau)^{-1} D \deltaw) J_\tau \, dx\notag\\&
+ \int_\Omega (Dv (D \tau)^{-1} D \deltaw (D \tau)^{-1}) : ( D \mtestv (D \tau)^{-1}) J_\tau \, dx \notag\\& 
+ \int_\Omega (Dv (D \tau)^{-1}): ( D \mtestv (D \tau)^{-1} D \deltaw (D \tau)^{-1}) J_\tau \, dx \notag\\& 
- \int_\Omega (Dv (D \tau)^{-1}) : ( D \mtestv (D \tau)^{-1})\Tr((D\tau)^{-1} D \deltaw) J_\tau \, dx \notag\\&
- \int_\Omega p \Tr( D \mtestv (D \tau)^{-1} D \deltaw (D \tau)^{-1}) J_\tau \, dx \notag\\&
+ \int_\Omega p \Tr( D \mtestv (D \tau)^{-1}) \Tr((D\tau)^{-1} D \deltaw) J_\tau \, dx \notag\\&
+ \int_\Omega \mtestp \Tr(D v (D \tau)^{-1} D \deltaw (D \tau)^{-1}) J_\tau \, dx \label{eq::opt_sys_w}\\&
- \int_\Omega \mtestp \Tr( D v (D \tau)^{-1}) \Tr((D\tau)^{-1} D \deltaw) J_\tau \, dx \notag\\&
- \int_\Omega ( D \deltaw + D \deltaw^\top) : D \mtestw\, dx \notag\\&
+ \sum_{i=1}^d \mu_i \int_\Gobs 2 (x_i + w_i) (\deltaw)_i ((D \tau)^{-\top} n)_i J_\tau \, dx \notag\\&
- \sum_{i=1}^d \mu_i \int_\Gobs (x_i + w_i)^2 ((D \tau)^{-\top} (D \deltaw)^\top (D \tau)^{-\top} n)_i J_\tau \, dx \notag\\&
+ \sum_{i=1}^d \mu_i \int_\Gobs (x_i + w_i)^2 ((D \tau)^{-\top} n)_i \Tr((D\tau)^{-1} D \deltaw) J_\tau\, dx \notag\\&
+ \frac\lambda{d} \int_\Gobs (x + \deltaw)^\top (D \tau)^{-\top} n J_\tau \, ds \notag\\&
- \frac\lambda{d} \int_\Gobs (x + w)^\top (D \tau)^{-\top} (D \deltaw)^\top (D \tau)^{-\top} n J_\tau \, ds \notag\\&
+ \frac\lambda{d} \int_\Gobs (x + w)^\top (D \tau)^{-\top} n \Tr((D\tau)^{-1} D \deltaw) J_\tau\, ds = 0, \notag
\end{align}
\endgroup
\begin{align}
\begin{split}
\L_{v}  \deltav = & \int_{\Omega} \left( D \deltav (D\tau)^{-1} \right) :  \left( D v (D\tau)^{-1} \right)  J_\tau\, dx\\
&- \int_{\Omega}  \left( D \deltav (D\tau)^{-1} \right) :  \left( D \mtestv (D\tau)^{-1} \right)  J_\tau \,dx
- \int_{\Omega} \mtestp \Tr (D \deltav(D\tau)^{-1}) J_\tau \,dx = 0,
\end{split}
\label{eq::opt_sys_v}
\end{align}
\begin{equation}
\label{eq::opt_sys_p}
\L_{p}  \deltap = \int_\Omega \deltap \Tr \left( D \mtestv (D\tau)^{-1} \right) J_\tau\, dx = 0,
\end{equation}
\begin{align}
\label{eq::opt_sys_adj_v}
\L_{\mtestv} \deltatestv = &
-\int_{\Omega}  \left( D v (D\tau)^{-1} \right) :  \left( D \deltatestv (D\tau)^{-1} \right)  J_\tau \,dx \\ 
\notag &+ \int_{\Omega} p \Tr (D \deltatestv(D\tau)^{-1}) J_\tau \,dx = 0,
\end{align}
\begin{equation}
\label{eq::opt_sys_adj_p}
\L_{\mtestp}  \deltatestp = - \int_\Omega \deltatestp \Tr \left( D v (D\tau)^{-1} \right) J_\tau\, dx = 0,
\end{equation}
\begin{equation}
\L_{\mtestw}  \deltatestw =
- \int_\Omega (D w + D w^\top): D \deltatestw \, dx + \int_\Gobs b\cdot \deltatestw \, ds = 0,
\end{equation}
\begin{equation}
\L_{b}  \deltab =
- \int_\Gobs \deltab\cdot \mtestb + D_{\Gobs} \deltab : D_{\Gobs} \mtestb \, ds + \int_\Gobs \deltab \cdot \mtestw \, ds = 0,
\end{equation}
\begin{equation}
\label{eq::deltatestb}
\L_{\mtestb}  \deltatestb =
- \int_\Gobs b\cdot \deltatestb + D_{\Gobs} b : D_{\Gobs} \deltatestb \, ds + \int_\Gobs \control n \cdot \deltatestb \, ds = 0,
\end{equation}
\begin{equation}
\label{eq::control} 
\L_{\control}  \deltacontrol =
\alpha \int_\Gobs \control \deltacontrol \, ds + \int_\Gobs \deltacontrol n \cdot \mtestb \, ds 
= 0,
\end{equation}
\begin{equation}
\L_{\lambdav}  \deltalambdav = \frac{\deltalambdav}{d} \int_{\hatGobs} (x+w)^\top (D\tau)^{-\top} n J_\tau - x\cdot n \, ds = 0,
\end{equation}
\begin{equation}
\label{eq::opt_sys_mu}
\L_{\lambdabc}  \deltalambdabc =
\sum_{i=1}^d (\deltalambdabc)_i \int_{\Gobs} (x_i + w_i)^2 \, ((D\tau)^{-\top} n)_i  J_\tau \, ds = 0,
\end{equation}
for all $(\deltaw, \deltav, \deltap, \deltatestw, \deltatestv, \deltatestp, \deltacontrol, \deltalambdav, \deltalambdabc)$ in appropriate function spaces.
We thus obtain a system of nonlinear, coupled PDEs in a suitable form for standard finite element toolboxes. 
\subsection{On the semismoothness of the optimality system}
\label{sec::semismoothness}
We solve the system \eqref{eq::opt_sys_v}-\eqref{eq::opt_sys_mu} with a semismooth Newton method. To justify this, we show semismoothness of
the system and therefore take a closer look at the term in \eqref{eq::opt_sys_v} that appears 
by differentiating
\begin{align*}
\frac12 \int_\Omega ((\eta_1 - J_\tau)_+)^2 dx = \int_\Omega (f_2 \circ \iota \circ f_1 (w))(x) dx = F \circ \iota \circ f_1
\end{align*}
with 
\begin{align*}
 f_1: ~&H^s (\Omega)^d \to H^{s-1}(\Omega), \quad w \mapsto \eta_1 - J_\tau, \\
 \iota: ~&H^{s-1} (\Omega) \to L^r(\Omega), \quad v \mapsto v, \\
 f_2:~ &L^r(\Omega) \to L^1(\Omega), \quad q \mapsto \frac12 (q)_+^2, \\
 F: ~&L^r(\Omega) \to \mathbb R, \quad q \mapsto \int_\Omega \frac12(q)_+^2 dx
\end{align*}
and $2 \leq r \leq \infty$.
Since $H^{s-1}(\Omega)$ is 
a Banach algebra for $s> 1 + \frac{d}2$, $f_1: ~H^s(\Omega)^d \to H^{s-1}(\Omega)$ is $\mathcal C^\infty$. Since $s -1 -\frac{d}2 > 0$, the embedding $\iota$ is linear and continuous. 
The Nemytskii operator $f_2: L^r(\Omega) \to L^1(\Omega)$ is Fr\'echet differentiable for $r \geq 2$, see e.g. \cite[Sec. 4.3.3]{TrltzschBook}, and thus 
$F: q \mapsto \int_\Omega \frac12 (q)_+^2 dx$ 
is Fr\'echet 
differentiable as a mapping $L^r(\Omega) \to \mathbb R$ for $r \geq 2 $ with 
derivative 
$F^\prime (q): L^r(\Omega) \to \mathbb R, ~ h \mapsto \int_\Omega (q)_+ h dx$. Let 
$2 \leq r < \infty$. Then $F^\prime \in L^r(\Omega)^*$ as an element of the dual space
of $L^r(\Omega)$ can be identified with $F^\prime(q) = (q)_+ \in L^{r^\prime}(\Omega)$ where $ r^\prime = \frac{r}{r-1}$. Now, by \cite[Thm. 3.49]{UlbrichBook}, $q \mapsto (q)_+$ is locally Lipschitz and semismooth as a mapping
$L^r(\Omega) \to L^{r^\prime}(\Omega)$ for $r > 2$, which implies semismoothness of 
$w \mapsto F^{\prime} \circ \iota \circ f_1$ as a mapping $H^s(\Omega)^d \to L^{r^\prime}(\Omega)$ by \cite[Prop. 3.8]{UlbrichBook}. Hence, since 
$H^{s-1}(\Omega) \hookrightarrow L^\infty(\Omega)$ for $s> 1 + \frac{d}2$, the mapping
\begin{equation}
G: H^s(\Omega)^d \to (H^{s}(\Omega)^d)^*, \quad G(w)(\deltaw) := \int_\Omega (\eta_1 - J_\tau)_+ \Tr((D\tau)^{-1} D \deltaw) J_\tau \, dx
\label{eq::G-semismooth}
\end{equation}
is semismooth.
\begin{algorithm}
  \caption{Optimization algorithm}
  \label{alg::opt_alg}
  \begin{algorithmic}[1]
    \Require $0 < \atarget \leq \ainit$, $0 < \adec < 1$, $0 \leq \gamma_1$,
    $0 < \eta_1$,
    $n_{\text{ssn}}$,
    $\epsilon_\text{ssn}$
    \State Initialize all variables $( w, v, p, b, \mtestw,\mtestv,\mtestp, \mtestb, \control, \lambdav, \lambdabc)_0$ with zero
    \State $k \gets 0$, $\alpha_k \gets \ainit$
    \While {$\alpha_k \geq \atarget$}
    \Repeat
    \State \begin{varwidth}[t]{\linewidth}
      Solve \eqref{eq::opt_sys_v}-\eqref{eq::opt_sys_mu} for $(w, v, p, b, \mtestw, \mtestv, \mtestp, \mtestb, \control, \lambdav, \lambdabc)_{k+1}$ with \\ semismooth Newton method, $( w, v, p, b, \mtestw,\mtestv,\mtestp, \mtestb, \control, \lambdav, \lambdabc)_{k}$ \\ 
       as initial guess and regularization parameter $\alpha_k$
    \end{varwidth}\label{alg::semismooth}
    \If {Newton's method not converge to $\epsilon_\text{ssn}$ within $n_{\text{ssn}}$ iterations}
    \State $\alpha_{k} \gets \frac{1}{2} ( \frac{\alpha_k}{\adec} - \alpha_k)$
    \EndIf
    \Until Newton's method converged
    
    \State $\alpha_{k+1} \gets \adec \alpha_k$
    \State $k\gets k+1$
    \EndWhile
  \end{algorithmic}
\end{algorithm}

\subsection{Numerical Results}
\label{sec::numerics}

In this section we demonstrate the three proposed strategies S1-S3 in a two-dimensional (2d) and a three-dimensional (3d) case.
In both cases we consider a Stokes fluid in a flow tunnel with an obstacle in the center. Starting from a circular shape (in 2d) and a sphere (in 3d) the task is to optimize the shape such that the energy dissipation measured over the domain is minimized.
This is a classical test case, which is investigated in detail for instance in \cite{mohammadi2010applied}.

The experimental settings in 2d are given by a rectangular domain $\Omega = [-10,10]\times[-3,3]$ where the initial obstacle is a circle with radius $0.5$ and barycenter at $ (0,0)^\top$.
We consider a flow along the $x_1$-axis which is modeled by the inflow velocity profile
\begin{equation}
v^\infty_{x_1} = \cos(\frac{2 \Vert x \Vert_2 \pi}{\delta})
\end{equation}
where $\delta$ specifies the diameter of the inflow boundary in both 2d and 3d.
This is consistent with the zero-velocity boundary conditions at the walls of the flow tunnel.

The discretization of the domains is performed with the Delaunay method within the toolbox GMSH \cite{GMSH}.
In 2d we choose three different hierarchical grids with \num{1601}, \num{6404} and \num{25616} triangles.
After each refinement the grid at $\Gobs$ is adapted to interpolate the circular obstacle and consists of \num{141}, \num{282} and \num{564} line segments.

The 3d experiment is conducted in a cylindrical domain
\begin{equation*}
\Omega = \lbrace x \in \R^3: -10 \leq x_1 \leq 10, \sqrt{x_2^2 + x_3^2} \leq 3 \rbrace
\end{equation*}
where the initial obstacle is a sphere of radius $0.5$ with barycenter $(0,0,0)^\top$.
In this situation $\Omega$ is discretized with \num{6994} surface triangles forming $\Gobs$ and \num{118438} tetrahedrons in the volume.

For all numerical computations in this section we use the PDE toolbox GETFEM++ \cite{GETFEM}.
We utilize the parallelized version of this library and provide the nonlinear optimality system \crefrange{eq::opt_sys_w}{eq::opt_sys_mu} in the builtin language for weak formulations as it is.
In order to solve the nonlinear system second derivatives are computed symbolically by the library.
While all terms but one in \crefrange{eq::opt_sys_w}{eq::opt_sys_mu} are classically differentiable with respect to $w$, the integral in \cref{eq::opt_sys_w}, which involves the non-differentiable positive-part function $(\eta_1 - J_\tau)_+$, leads to a generalized derivative. Following the discussion in \cref{sec::semismoothness} of the semismoothness of the operator $G$ in \cref{eq::G-semismooth} we obtain for the assembly of the linearization matrix
\begin{multline}
\gamma_1 \int_\Omega \chi_{(\eta_1 > J_\tau)} \Tr((D\tau)^{-1} D \deltawsec)  \Tr((D\tau)^{-1} D \deltaw) J_\tau^2\\
+ (\eta_1 - J_\tau)_+ \Tr((D\tau)^{-1} D\deltawsec (D\tau)^{-1} D \deltaw) J_\tau\\
- (\eta_1 - J_\tau)_+ \Tr((D\tau)^{-1} D \deltaw) \Tr((D\tau)^{-1} D\deltawsec) J_\tau\, dx
\label{eq::second-derivative-w}
\end{multline}
for all $\deltaw, \deltawsec$. Corresponding to \cite[(4.1)]{UlbrichSemismooth} we can identify
$$-\chi_{(\eta_1 > J_\tau)} \Tr((D\tau)^{-1} D \deltawsec) J_\tau$$
in \cref{eq::second-derivative-w} with an element of the generalized differential of $(\eta_1 - J_\tau)_+$ evaluated in a direction $\deltawsec$.

For the discretization of the linearization matrix and the right hand side in Newton's method we choose piece-wise linear basis functions for all variables except for the velocity $v$ and its adjoint $\mtestv$. Here we choose piece-wise quadratic functions.
For simplicity, in each iteration of Newton's method for the system \crefrange{eq::opt_sys_w}{eq::opt_sys_mu} the parallel direct LU solver MUMPS \cite{MUMPS} is applied.

\begin{figure}
\begin{center}
\includegraphics[width=0.45\textwidth]{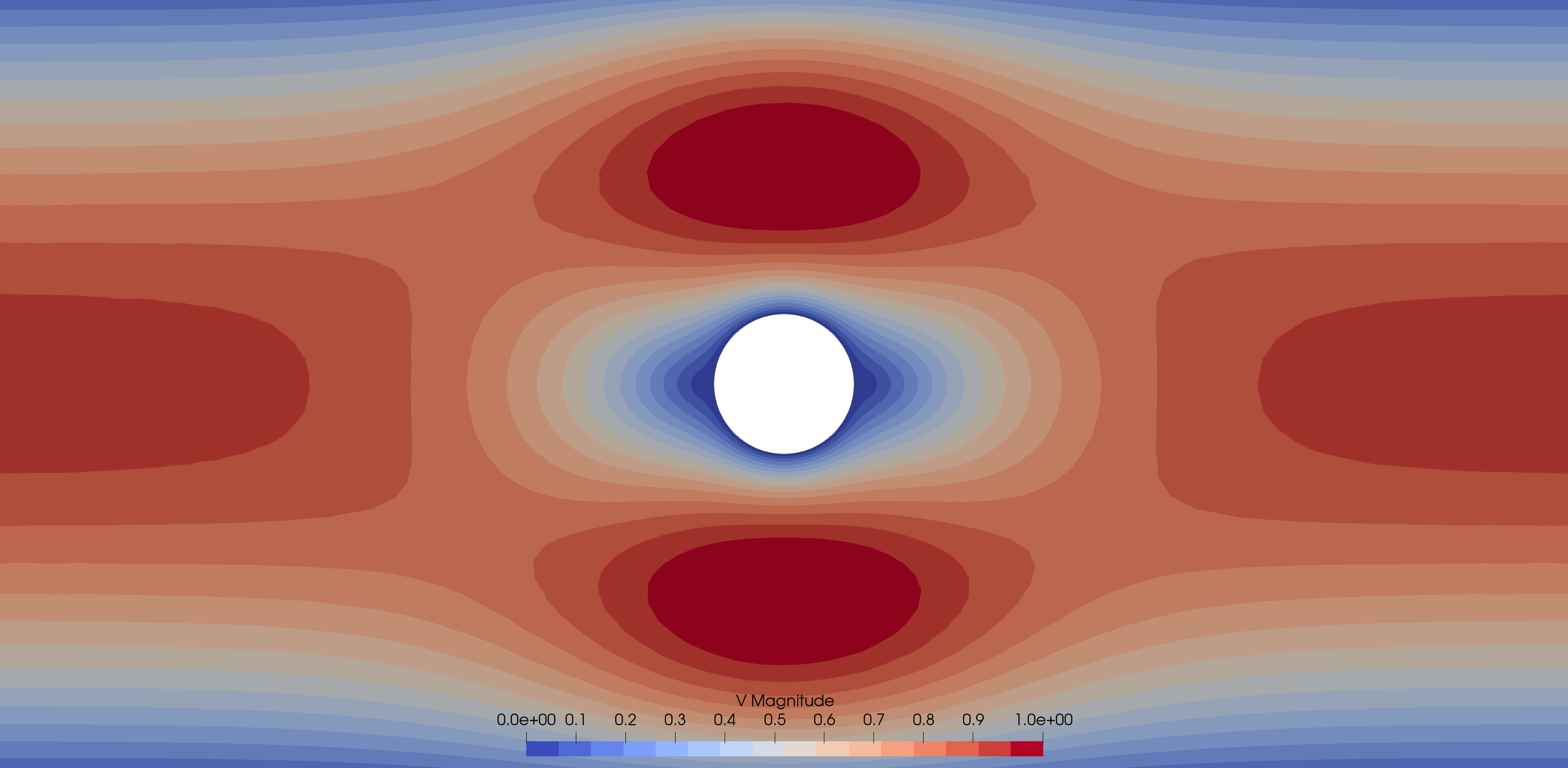}\hspace{0.02\textwidth}
\includegraphics[width=0.45\textwidth]{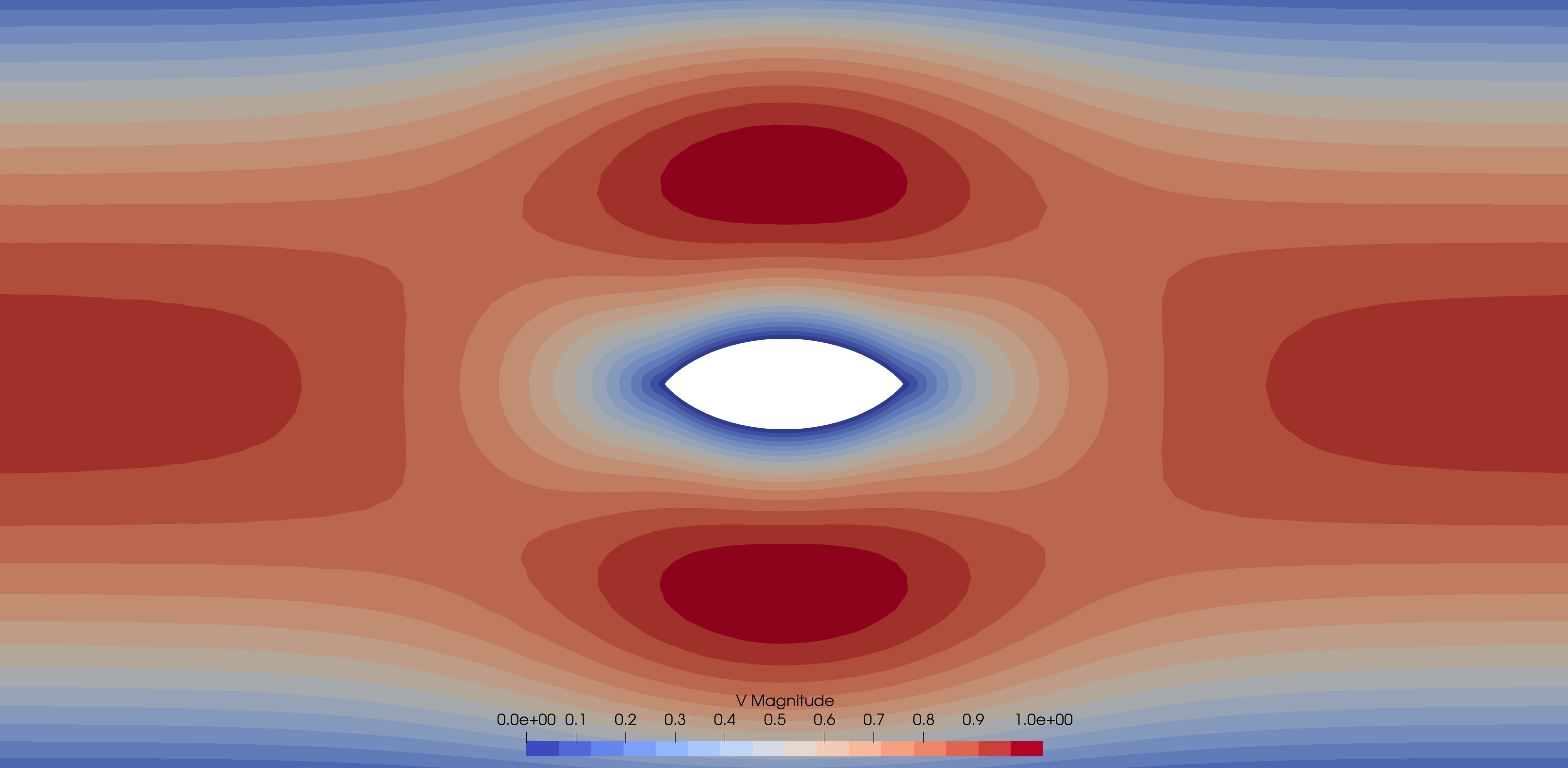}
\end{center}
\caption{Holdall domain $\holdall$ and Stokes flow in $\Omega = \holdall \setminus \Oobs$ on the left and the optimal, deformed configuration $\hatOobs = \tau(\Oobs)$ on the right. Color denotes $\Vert v\Vert$ and $\Vert \hat{v} \Vert$, respectively.}
\label{fig::2d_experiment}
\end{figure}
\begin{figure}
\begin{center}
\includegraphics[width=0.95\textwidth]{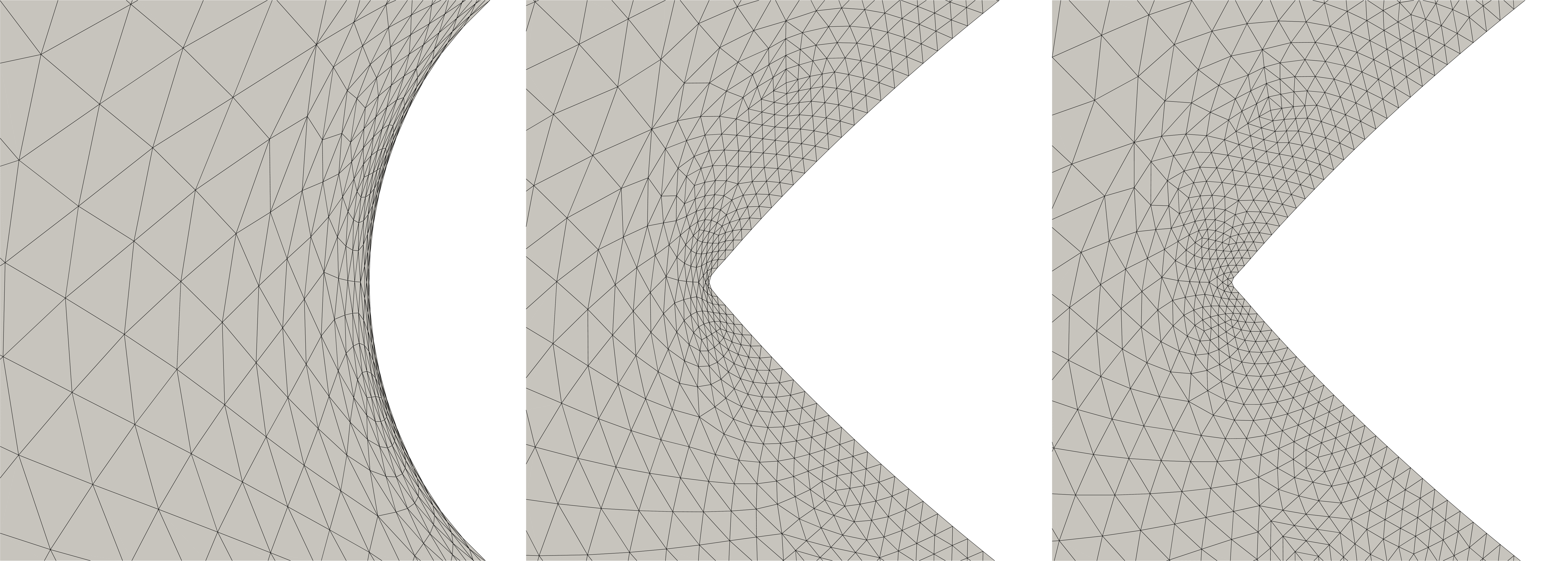}
\end{center}
\caption{Optimal solution for regularization parameter $\atarget = 10^{-10}$ following strategy S1,S2 and S3 (from left to right). The images show a $0.28 \times 0.28$ section centered at the point $(-0.8, 0.0)^\top$.}
\label{fig::2d_nose_grid}
\end{figure}
\begin{figure}
	\begin{center}
		\includegraphics[width=1.0\textwidth]{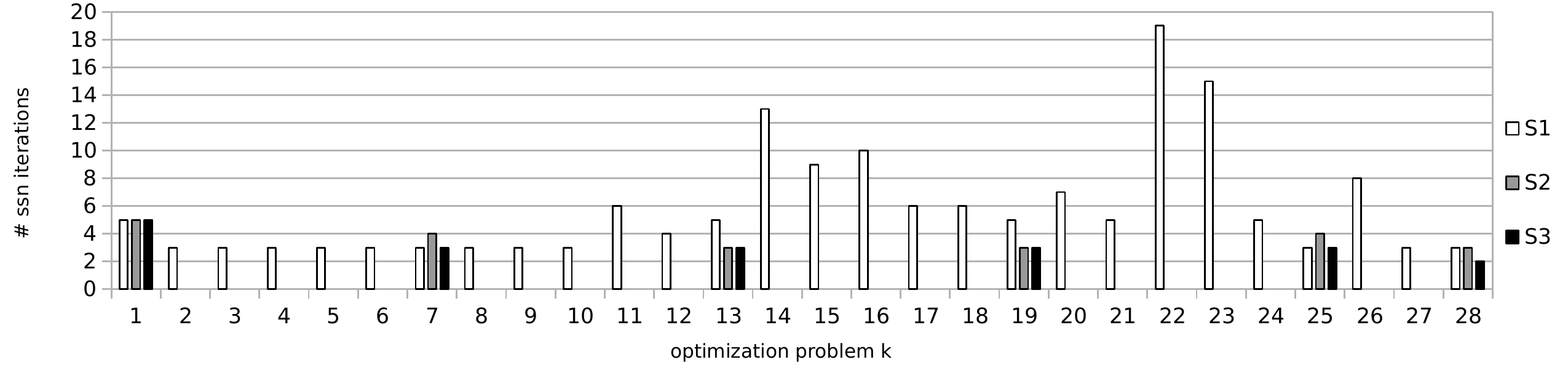}
	\end{center}
	\caption{Semismooth Newton iteration counts with a tolerance of relative residual $\epsilon_\text{ssn}=\num{1e-9}$ for each subsequent optimization problem $k$ with $\alpha=\num{1e-2}\cdot\frac{1}{2}^{k-1}$, $\atarget=\num{1e-10}$. For S2 and S3 $\adec = \frac{1}{64}$ is chosen, thus intermediate problems are left out.}
	\label{fig::ssn-iterations}
\end{figure}
\cref{fig::2d_experiment} depicts the 2d situation where color denotes the norm of the velocity field.
The velocity profile in the 3d experiment is similar to the one shown in \cref{fig::2d_experiment} since we choose the domain $\Omega$ in 3d to be the rotation body of the 2d domain.

In all experiments in this section $\epsilon_\text{ssn} = \num{1e-9}$ is chosen as tolerance of the relative residual norm in the semismooth Newton method in \cref{alg::opt_alg}.
Further, if the criterion is not fulfilled after $n_{\text{ssn}} = 40$ steps, $\alpha$ is increased again. 

In \cref{fig::2d_nose_grid} we compare the optimal solution for a regularization factor of $\atarget = 10^{-10}$ for the strategies S1, S2 and S3 on the finest grid with \num{25616} triangles and \num{564} surface elements.
Here the effect of the tangential movements of nodes can be seen.
While in strategy S1 in the leftmost figure the optimal shape stays round at the tip, strategy S2 and S3 approximate the kink.
The same holds true for the back of the shape, which is not shown here.
Since the resulting deformation field $w$ restricted to $\hatGobs$ in S1 points in normal direction,
the condition $J_\tau = \det(I + Dw) \geq \eta_1 > 0$ prevents the appearance of a kink.
Numerical tests show that the choice of $\Next$ plays a decisive role.
Since the reference shape $\Oobs$ is either a circle in 2d or a sphere in 3d with barycenter zero one can choose $\Next(x) = \frac{x}{\Vert x \Vert_2}$ as an extension to the normal vector field on $\Gobs$. The numerical results for S1 presented here are obtained for the choice $\Next(x) = (\frac{1}{2}+\Vert x \Vert_2)^2 x$.
Numerical experiments have shown that with the second choice of $\Next$ we come closer to the optimal shapes resulting from S2 and S3 than with the first variant.

\begin{figure}
\begin{center}
\includegraphics[height=0.15\textheight]{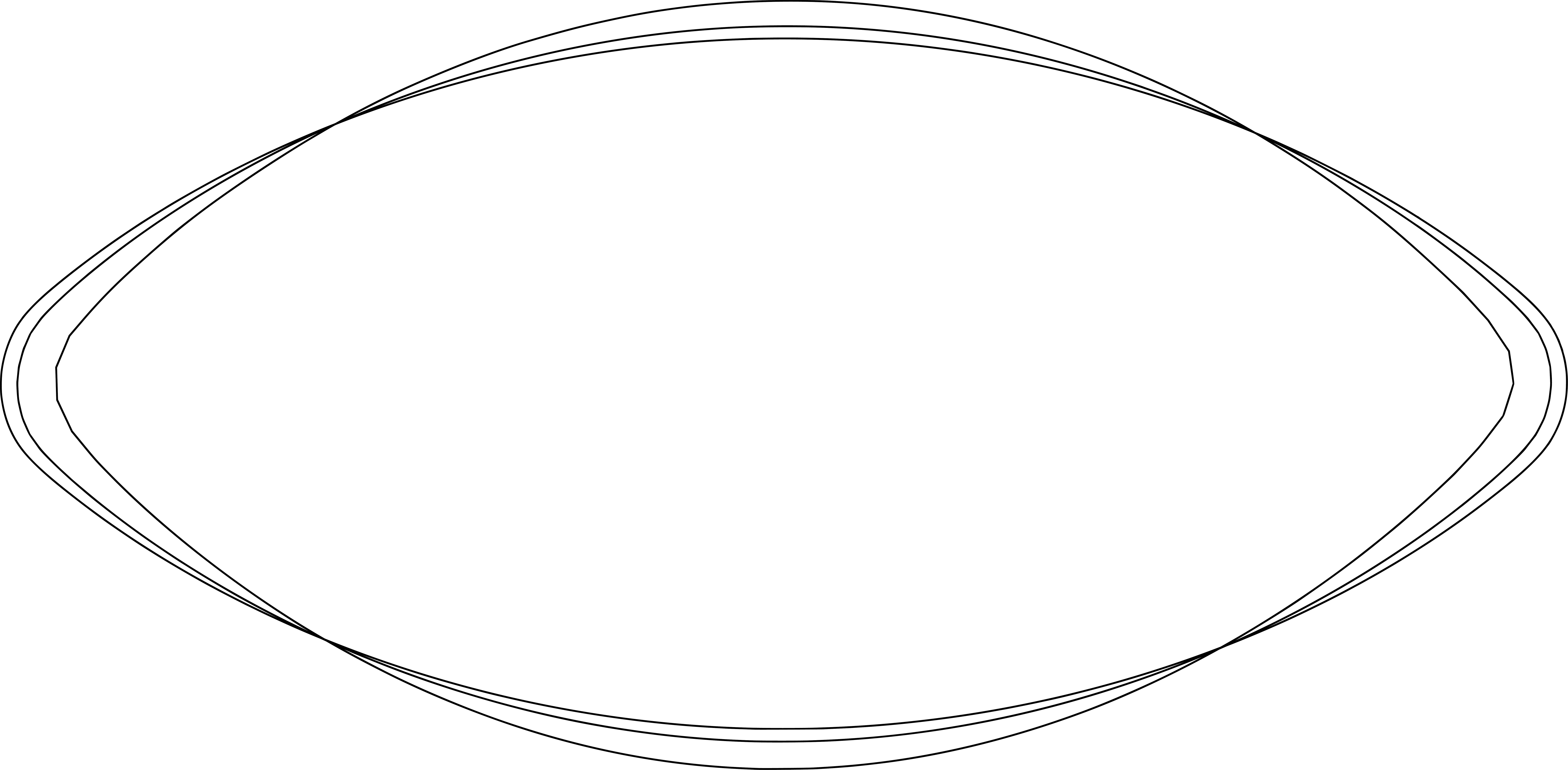}\hspace{1.0cm}
\includegraphics[height=0.15\textheight]{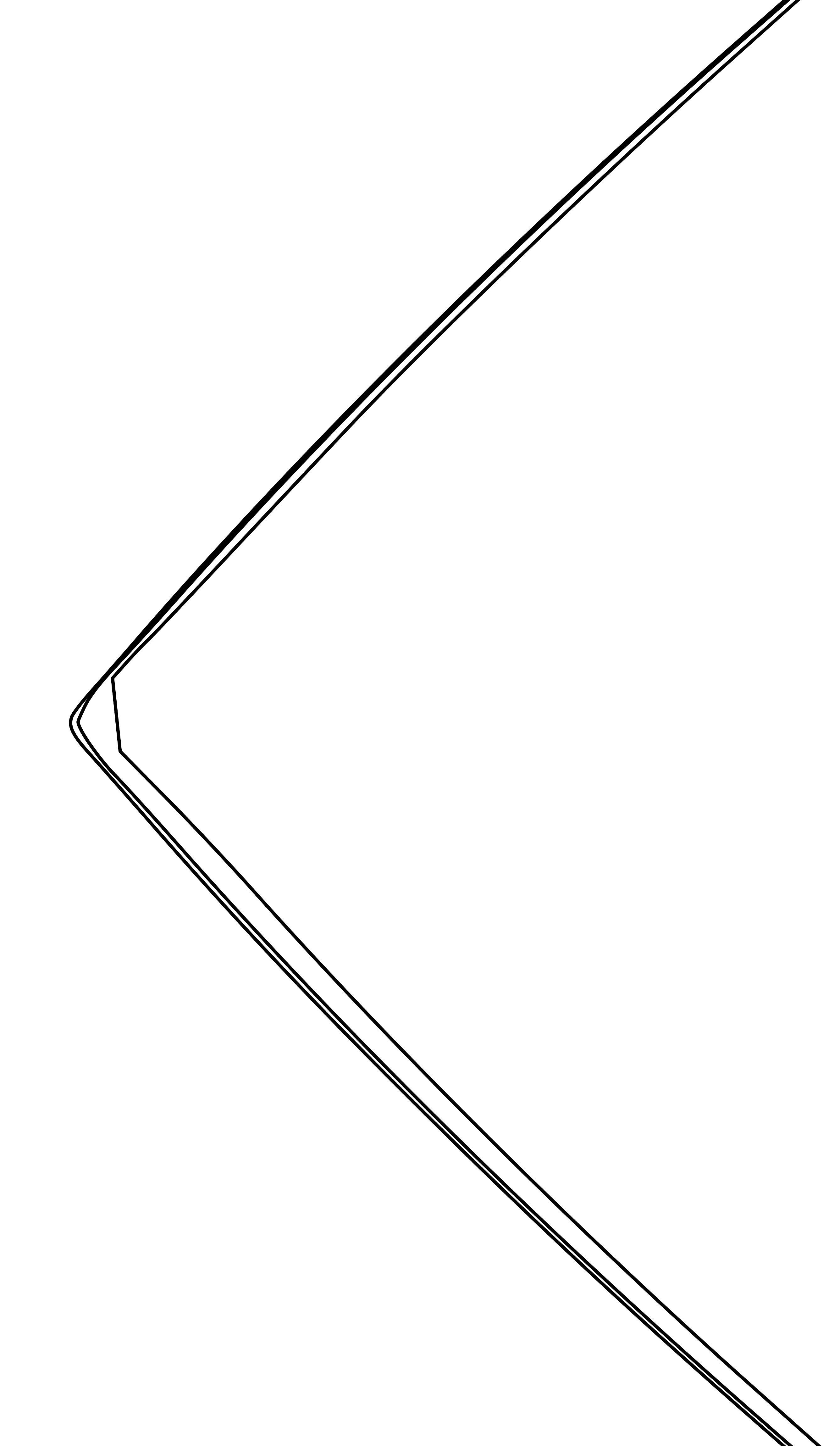}
\end{center}
\caption{Optimal solution for regularization parameter $\atarget = 10^{-10}$ under grid refinements $j=1,2,3$, i.e.\ $1601 \cdot 4^{j-1}$ triangles, $141 \cdot 2^{j-1}$ surface lines. Strategy S1 on the left hand side and S3 with a zoom on the nose of the shape.}
\label{fig::2d_h_ref}
\end{figure}

In \cref{fig::ssn-iterations} the number of semismooth Newton iterations is depicted for each of the optimization problems.
According to \cref{alg::opt_alg} we utilize the optimal control of one problem as initialization for the next one with smaller regularization parameter $\alpha$.
Computations are performed on the finest 2d grid considered in this section, i.e.\ $j=3$.
For all three strategies S1,S2 and S3 we choose $\ainit = \num{1e-2}$  and $\atarget=\num{1e-10}$.
While for S1 $\adec=\frac{1}{2}$ is required to guarantee convergence of the semismooth Newton method within $n_\text{ssn}=40$ we proceed with $\adec=\frac{1}{64}$ for S2 and S3.
We observe that the number of required iterations significantly increases beginning in the $14$th optimization problem for strategy S1.
This can be explained by the positive-part in the objective of \cref{optprobstokespen} becoming active.

In the next experiment we consider strategies S1 and S3 under mesh refinements.
\cref{fig::2d_h_ref} shows the corresponding results for three hierarchically refined grids resulting in $1601 \cdot 4^{j-1}$ triangles and $141 \cdot 2^{j-1}$ surface lines for $j=1,2,3$.
The regularization parameter is again chosen as $\atarget = 10^{-10}$.
The right hand figure shows a zoom-in to the $0.28\times 0.28$ square around the tip in order to make the shapes distinguishable.
On the left hand side, i.e.\ where there are only deformations in normal direction, we observe a slow grid-convergence towards the theoretical, optimal shape.
Strategy S3, in contrast, leads to comparable results even on relatively coarse grids.

\begin{figure}
\begin{center}
\includegraphics[width=0.5\textwidth]{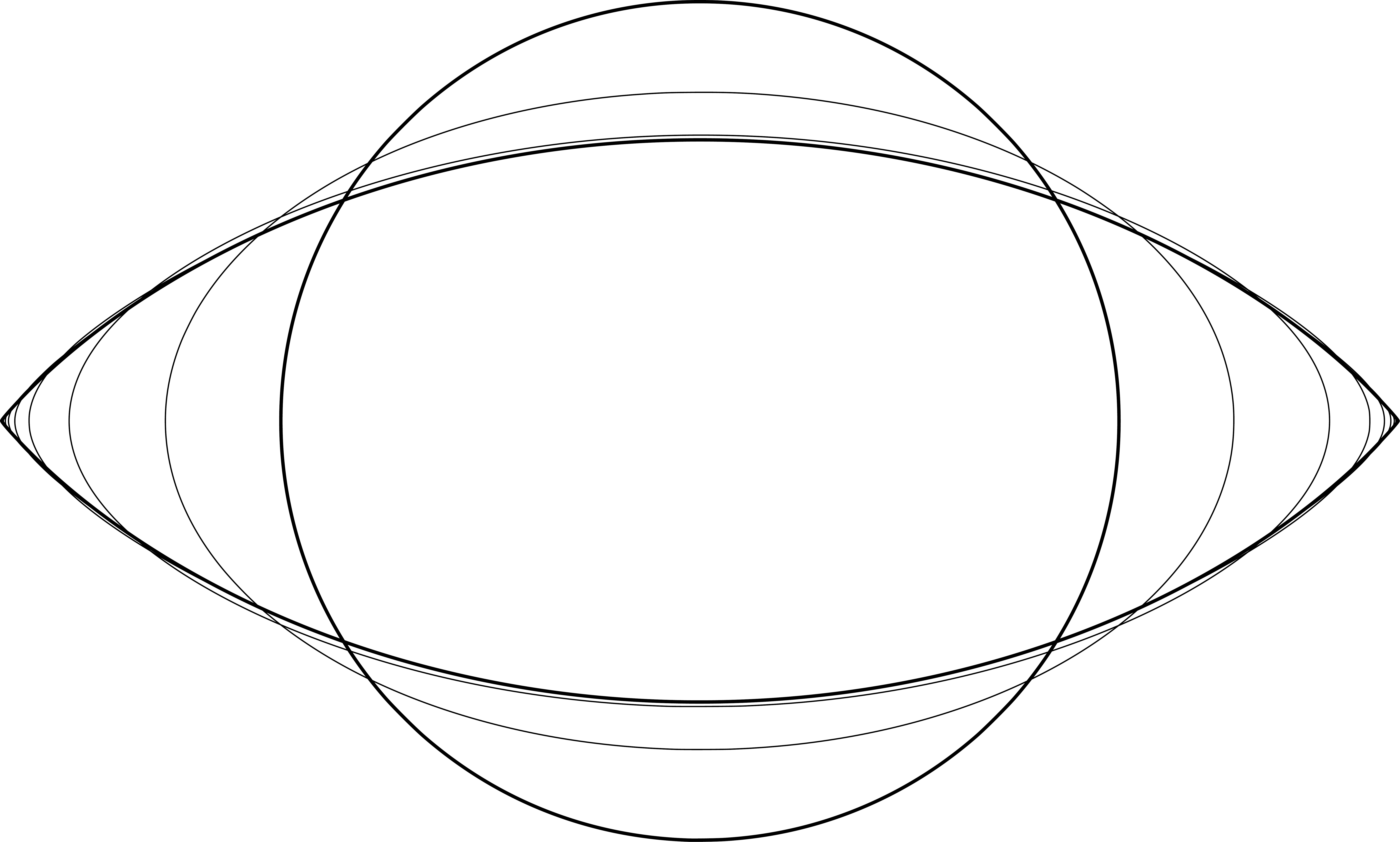}
\end{center}
\caption{Optimal solution with regularization parameter $\alpha = 10^{-k}$ for $k=0,\dots,10$ according to strategy S3.}
\label{fig::2d_alpha}
\end{figure}
\cref{fig::2d_alpha} visualizes the effect of the regularization parameter $\alpha$.
More precisely, a sequence of optimal shapes for different optimization problems depending on $\alpha$ are illustrated.
The figure shows a transition for $\alpha = 10^{-k}$ for $k=0,\dots,10$ according to strategy S3 on the finest grid, i.e. it presents 
the intermediate, optimal solutions one obtains after each iterations of \cref{alg::opt_alg}.
It should be mentioned that this fine resolution in $\alpha$ is chosen for demonstration purposes only.
For the specific example we are able to choose an initial and decrement factor for $\alpha$ such that $\atarget = 10^{-10}$ is reached in two iterations of \cref{alg::opt_alg}.  
Since we are only interested in the optimal shape with respect to $\atarget$ it is our intention to choose both $\ainit$ and $\adec$ in  \cref{alg::opt_alg} as small as possible.
This choice is made heuristically depending on whether the semismooth Newton method in line \cref{alg::semismooth} converges within a prescribed number of iterations.
If the inner iteration does not converge, we choose $\adec$ closer to one.
In all two dimensional computations we choose the parameter $\eta_1 = \num{8e-2}$, $\gamma_1 = \num{1e3}$ independently of the $\alpha$-strategy.

\begin{figure}
\begin{center}
\includegraphics[width=0.49\textwidth]{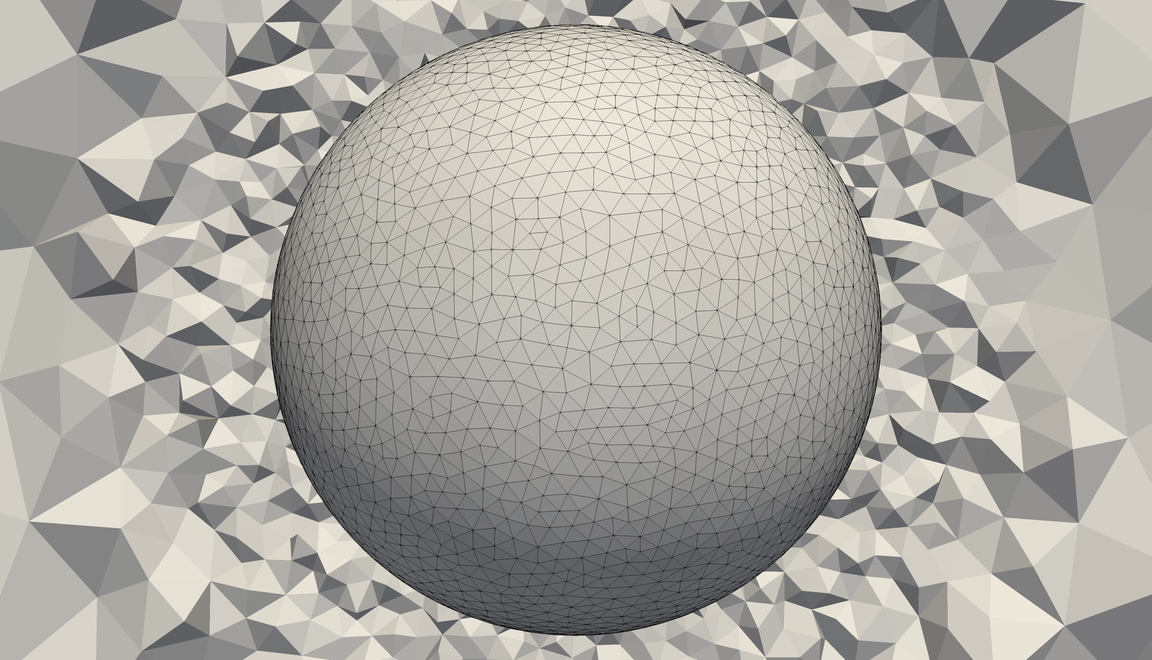}
\includegraphics[width=0.49\textwidth]{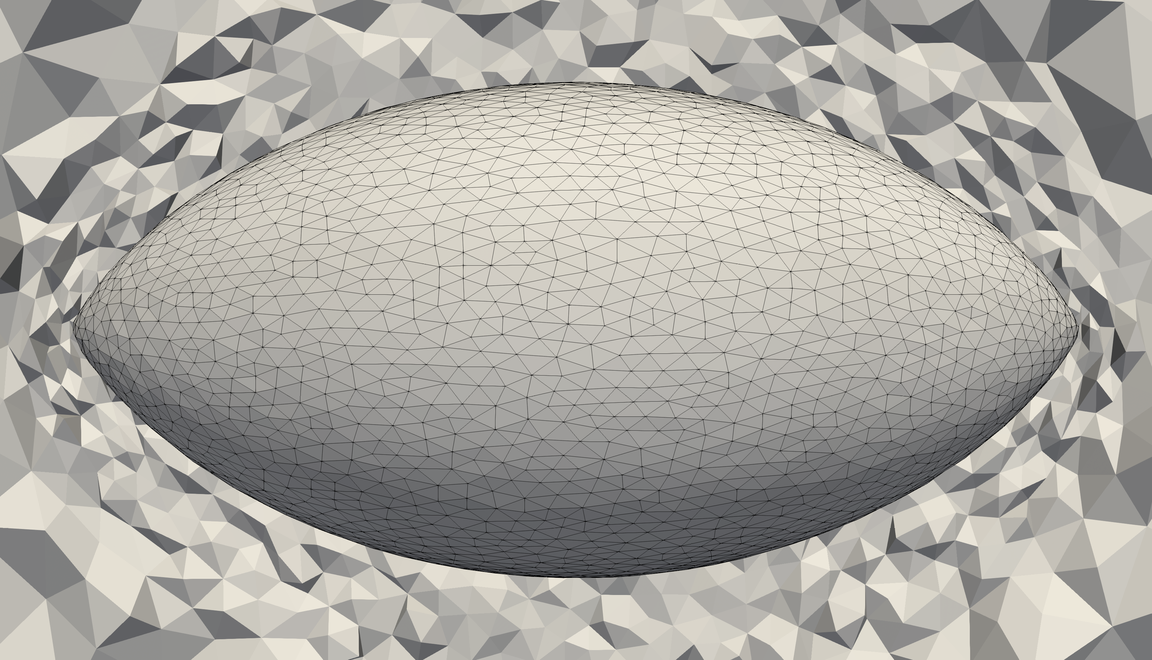}
\end{center}
\caption{Reference $\Oobs$ (left) and transformed shape $\hatOobs$ (right) according to optimal displacement $w$ in 3d Stokes flow  with a crinkled slice through the surrounding grid. The result is achieved with strategy S3 and $\atarget = 10^{-10}$.}
\label{fig::3d_results}
\end{figure}
\cref{fig::3d_results} visualizes \cref{alg::opt_alg} for 3d problems.
It visualize the reference shape $\Gobs$ as the surface triangulation together with a slice through the tetrahedral grid of the reference domain $\Omega$ in the left subfigure.
On the right hand side the effect of the optimal displacement field $w$ to the shape $\hatGobs$ and the volume $\hat{\Omega}$ is shown.
As mentioned above we are only interested in the optimal control $\control$ and the corresponding displacement field $w$ for the regularization parameter $\atarget$.
In the 2d examples this could be achieved with very few outer iterations of \cref{alg::opt_alg}, which means that one could start with a small $\ainit$ and proceed fast towards $\atarget$.
However, in the 3d case it turns out that a more careful strategy has to be considered in order to obtain convergence of Newton's method within $n_{\text{ssn}}$ steps.
The results shown in \cref{fig::3d_results} are obtained with $\ainit = \num{1e-1}$, $\atarget = \num{1e-6}$, $\adec = \num{0.5}$, $\eta_1 = \num{8e-2}$, $\gamma_1 = \num{1e3}$. 

\section{Conclusion and Outlook}
\label{sec::conclusion}
We present a formulation of shape optimization problems based on the method 
of mappings that is motivated from a continuous perspective. 
Using this approach replaces the problem of preventing mesh degeneration by the question of finding a suitable set of admissible transformations.
We propose a method such that the set of feasible transformations is a subset of the space of $\mathcal C^1$-diffeomorphisms. Numerical simulations substantiate the versatility of this approach.
Furthermore, it allows for refinement and relocation strategies
during the optimization process and can also be combined with adaptive mesh refinement strategies and globalized trust region methods. This, however, is left for future research.

\section*{Acknowledgments}
Johannes Haubner and Michael Ulbrich received support from the Deutsche
Forschungsgemeinschaft (DFG, German Research Foundation) 
as part of the International Research Training Group IGDK 1754
``Optimization and Numerical Analysis for Partial Differential Equations with Nonsmooth
Structures'' -- Project Number 188264188/GRK1754.
The work of Martin Siebenborn was partly supported by the DFG within the Research Training Group 2583 ``Modeling, Simulation and Optimization of Fluid Dynamic Applications''.

\bibliographystyle{siamplain}
\bibliography{literature}

\end{document}